\definecolor{uuuuuu}{rgb}{0.26666666666666666,0.26666666666666666,0.26666666666666666}
\definecolor{xdxdff}{rgb}{0.49019607843137253,0.49019607843137253,1.}
\definecolor{ffqqqq}{rgb}{1.,0.,0.}
\definecolor{uuuuuu}{rgb}{0.26666666666666666,0.26666666666666666,0.26666666666666666}
\definecolor{qqwuqq}{rgb}{0.,0.39215686274509803,0.}
\definecolor{zzttqq}{rgb}{0.6,0.2,0.}
\definecolor{xdxdff}{rgb}{0.49019607843137253,0.49019607843137253,1.}
\definecolor{qqqqff}{rgb}{0.,0.,1.}
\definecolor{cqcqcq}{rgb}{0.7529411764705882,0.7529411764705882,0.7529411764705882}
\theoremstyle{plain}
\newtheorem{theo1}[subsubsection]{Theorem}
\newtheorem{lemma}[subsection]{Lemma}
\newtheorem{lemma1}[subsubsection]{Lemma}
\newtheorem{prop}[subsection]{Proposition}
\newtheorem{propo}[subsection]{Proposition}
\theoremstyle{definition}
\newtheorem{defi}[subsection]{Definition}
\newtheorem{algorithm}[subsection]{Alogorithm}
\newtheorem{cor}[subsection]{Corollary}
\newtheorem{rem}[subsection]{Remark}
\newtheorem{note}[subsection]{Note}
\newcommand{\uu}{\cup}
\newcommand{\ii}{\cap}
\newcommand{\sci}{\subset}
\newcommand{\es}{\emptyset}
\newcommand{\set}[1]{\{#1\}}
\newcommand{\ga}{\alpha}
\newcommand{\gb}{\beta}
\newcommand{\gd}{\delta}
\newcommand{\gs}{\sigma}
\newcommand{\gt}{\tau}
\newcommand{\tbf}{\textbf}
\newcommand{\tit}{\textit}
\newcommand{\D}[1]{\mathbb{#1}}
\newcommand{\te}{\text}
\newcommand{\ol}{\overline}
\newcommand{\ul}{\underline}
\newcommand{\nd}{\noindent}
\begin{document}

To appear, Real Analysis Exchange
\title{Optimal Quantization For Mixed Distributions}

\author{Mrinal Kanti Roychowdhury}
\address{School of Mathematical and Statistical Sciences\\
University of Texas Rio Grande Valley\\
1201 West University Drive\\
Edinburg, TX 78539-2999, USA.}
\email{mrinal.roychowdhury@utrgv.edu}

\subjclass[2010]{28A80, 60Exx, 94A34.}
\keywords{Optimal sets, quantization error, quantization dimension, quantization coefficient, mixed distribution}
\thanks{}

\date{}
\maketitle

\pagestyle{myheadings}\markboth{Mrinal Kanti Roychowdhury}{Optimal Quantization for Mixed Distributions}

\begin{abstract} The basic goal of quantization for probability distribution is to reduce the number of values, which is typically uncountable, describing a probability distribution to some finite set and thus approximation of a continuous probability distribution by a discrete distribution. Mixed distributions are an exciting new area for optimal quantization. In this paper, we have determined the optimal sets of $n$-means, the $n$th quantization errors, and the quantization dimensions of different mixed distributions. Besides, we have discussed whether the quantization coefficients for the mixed distributions exist. The results in this paper will give a  motivation and insight into more general problems in quantization for mixed distributions.

\end{abstract}

\section{Introduction}
The most common form of quantization is rounding-off. Its purpose is to reduce the cardinality of the representation space, in particular, when the input data is real-valued. It has broad application in engineering and technology (see \cite{GG, GN, Z}).  Let $\D R^d$ denote the $d$-dimensional Euclidean space equipped with the Euclidean norm $\|\cdot\|$, and let $P$ be a Borel probability measure on $\D R^d$ where $d\geq 1$. Then, the $n$th \textit{quantization
error} for $P$, with respect to the squared Euclidean distance, is defined by
\begin{equation*} \label{eq1} V_n:=V_n(P)=\inf \Big\{V(P; \ga) : \ga \subset \mathbb R^d, 1\leq \text{ card}(\ga) \leq n \Big\},\end{equation*}
where $V(P; \ga)= \int \min_{a\in\alpha} \|x-a\|^2 dP(x)$ represents the distortion error due to the set $\ga$ with respect to the probability distribution $P$.
A set $\ga\sci \D R^d$ is called an optimal set of $n$-means for $P$ if $V_n(P)=V(P; \ga)$. Of course, such a set $\ga$ exists if the mean squared error or the expected squared Euclidean distance $\int \| x\|^2 dP(x)$ is finite (see \cite{AW, GKL, GL, GL1}). For a continuous Borel probability measure an optimal set of $n$-means contains exactly $n$ elements (see \cite{GL1}). The elements of an optimal set of $n$-means are called \tit{optimal quantizers}.  For some work in this direction, one can see \cite{CR, DR, GL2, R1, R2, R3, R4, R5}.
For a finite set $\ga \sci \D R^d$ and $a\in \ga$, by $M(a|\ga)$ we denote the set of all elements in $\D R^d$ which are nearest to $a$ among all the elements in $\ga$.
$M(a|\ga)$ is called the \tit{Voronoi region} generated by $a\in \ga$. On the other hand, the set $\set{M(a|\ga) : a \in \ga}$ is called the \tit{Voronoi diagram} or \tit{Voronoi tessellation} of $\D R^d$ with respect to the set $\ga$. The point $a$ is called the centroid of its own Voronoi region if $a=E(X : X\in M(a|\ga))$, where $X$ is a $P$-distributed random variable.
A Borel measurable partition $\set{A_a : a\in \ga}$ is called a \tit{Voronoi partition} of $\D R^d$ with respect to the probability distribution $P$, if $P$-almost surely  $A_a\sci M(a|\ga)$ for all $a\in \ga$.
Let us now state the following proposition (see \cite{GG, GL1}).
\begin{prop} \label{prop0}
Let $\ga$ be an optimal set of $n$-means, $a \in \ga$, and $M (a|\ga)$ be the Voronoi region generated by $a\in \ga$. Then, for every $a \in\ga$,
$(i)$ $P(M(a|\ga))>0$, $(ii)$ $ P(\partial M(a|\ga))=0$, $(iii)$ $a=E(X : X \in M(a|\ga))$, and $(iv)$ $P$-almost surely the set $\set{M(a|\ga) : a \in \ga}$ forms a Voronoi partition of $\D R^d$.
\end{prop}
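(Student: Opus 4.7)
The plan is to derive the four properties from the optimality of $\ga$ via standard variational arguments, taking them roughly in the order (i), (iii), (iv), (ii).

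For (i), I would argue by contradiction. If $P(M(a_0|\ga))=0$ for some $a_0\in\ga$, then $P$-almost every $x$ satisfies $\|x-a\|<\|x-a_0\|$ for some $a\in\ga\setminus\{a_0\}$, so
\[\min_{a\in\ga}\|x-a\|^2 \;=\; \min_{a\in\ga\setminus\{a_0\}}\|x-a\|^2 \qquad P\te{-a.s.}\]
Hence the distortion of $\ga\setminus\{a_0\}$ equals that of $\ga$, namely $V_n$. This means a set of at most $n-1$ points attains $V_n$, contradicting optimality of $\ga$ (together with the standard fact that, under the hypotheses ensuring existence of an optimal $n$-quantizer set, such a set has exactly $n$ distinct elements).

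For (iii), fix $a_0\in\ga$ and consider the competitor $\ga_y:=(\ga\setminus\{a_0\})\cup\{y\}$ for arbitrary $y\in\D R^d$. Since points outside $M(a_0|\ga)$ retain their old nearest neighbors in $\ga_y$, the difference of distortions satisfies
\[\int \min_{a\in\ga_y}\|x-a\|^2\,dP(x)\;-\;V_n\;\le\;\int_{M(a_0|\ga)}\bigl(\|x-y\|^2-\|x-a_0\|^2\bigr)\,dP(x).\]
The right-hand side is a strictly convex quadratic in $y$ (using $P(M(a_0|\ga))>0$ from (i)), uniquely minimized at $y^\ast=E(X\mid X\in M(a_0|\ga))$. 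Optimality of $\ga$ then forces $a_0=y^\ast$, giving the centroid property.

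For (iv), observe that every $x\in \D R^d$ lies in at least one $M(a|\ga)$ by definition, and that any pairwise overlap $M(a|\ga)\cap M(b|\ga)$ with $a\neq b$ consists of points equidistant from $a$ and $b$, hence is contained in $\partial M(a|\ga)$. Part (ii) then yields $P$-a.s.\ disjointness. Finally, for (ii)---which I expect to be the main obstacle---the boundary $\partial M(a|\ga)$ is contained in a finite union of perpendicular bisectors $H_{ab}=\set{x\in\D R^d: \|x-a\|=\|x-b\|}$. The plan is to show that no such $H_{ab}$ can carry positive $P$-mass within the portion separating $M(a|\ga)$ and $M(b|\ga)$: were this to happen, one could perturb $a$ slightly along the segment $ab$; by the centroid property (iii) the first-order change in distortion vanishes, while the positive bisector mass, previously at equal distance from $a$ and $b$, is swept entirely onto the side strictly nearer one quantizer, producing a net decrease in distortion and contradicting optimality. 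Making this second-order perturbation rigorous, and tracking how the Voronoi cells depend continuously on the perturbation parameter, is the delicate step.
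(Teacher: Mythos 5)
The paper does not prove this proposition --- it is quoted as a standard fact with a pointer to Gersho--Gray and Graf--Luschgy --- so there is no in-paper argument to compare with. Your treatment of (i), (iii), and (iv) is essentially correct, and the ordering is the natural one. But the perturbation you sketch for (ii) has a real gap: moving $a$ to $a_\epsilon:=a+\epsilon(b-a)$ produces no first-order decrease in distortion, even when the shared face $H:=M(a|\ga)\ii M(b|\ga)$ carries positive mass. On $H$ one has $\langle x-a,\,b-a\rangle=\tfrac12\|b-a\|^2$, so the contribution of $H$ to the change in distortion is $-\epsilon\|b-a\|^2\,P(H)+O(\epsilon^2)$, as you want. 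But the centroid condition (iii) says $\int_{M(a|\ga)}(x-a)\,dP=0$ over the \emph{closed} cell, which already contains $H$; hence $\int_{M(a|\ga)\setminus H}(x-a)\,dP=-\int_H(x-a)\,dP$, and the contribution of $M(a|\ga)\setminus H$ is $+\epsilon\|b-a\|^2\,P(H)+O(\epsilon^2)$. The two first-order terms cancel exactly, leaving a change of order $\epsilon^2$ and no contradiction. In effect you invoke the centroid identity once to say ``the first-order change vanishes'' and then add the bisector gain on top, but that gain is already built into the vanishing.

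The standard argument, and the one underlying the Graf--Luschgy reference, does not perturb the quantizer; it perturbs the tie-breaking. Your variational proof of (iii) in fact shows $c=E(X:X\in R_c)$ for \emph{any} measurable partition $\set{R_c}_{c\in\ga}$ of $\D R^d$ with $R_c\ci M(c|\ga)$ for all $c$, since the only property used is that $\|x-c\|=\min_{c'\in\ga}\|x-c'\|$ on $R_c$. Now fix $a\neq b$ and suppose $P(H)>0$ with $H=M(a|\ga)\ii M(b|\ga)$. Take such a partition $\set{R_c}$ with $H\ci R_a$, and the modified partition $\set{R_c'}$ obtained by transferring $H$ from $R_a$ to $R_b$ (admissible because $H\ci M(b|\ga)$). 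The centroid identity applied to $a$ in both partitions gives $a=E(X:X\in R_a)=E(X:X\in R_a\setminus H)$, and subtracting the two defining integral relations forces $a=E(X:X\in H)$; applying it to $b$ gives $b=E(X:X\in R_b)=E(X:X\in R_b\uu H)$, hence $b=E(X:X\in H)$, contradicting $a\neq b$. Since $\partial M(a|\ga)\ci\bigcup_{b\neq a}\big(M(a|\ga)\ii M(b|\ga)\big)$, this yields $P(\partial M(a|\ga))=0$, which is the missing step.
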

The above proposition implies that the points in an optimal set are the centroids of their own Voronoi regions, in other words, the points in an optimal set are an evenly-spaced distribution of sites in the domain with minimum distortion error with respect to a given probability measure and is therefore very useful in many fields, such as clustering, data compression, optimal mesh generation, cellular biology, optimal quadrature, coverage control and geographical optimization, for more details one can see \cite{DFG, OBSC}. Besides, it has  applications in energy efficient distribution of base stations in a cellular network  \cite{HCHSVH, KKR, S}. In both geographical and cellular applications
the distribution of users is highly complex and often modeled by a fractal \cite{ABDHW, LZSC}.
The numbers
\[\ul D(P):=\liminf_{n\to \infty}  \frac{2\log n}{-\log V_n(P)} \te{ and } \ol D(P):=\limsup_{n\to \infty} \frac{2\log n}{-\log V_n(P)} \]
are, respectively, called the \tit{lower} and \tit{upper quantization dimensions} of the probability measure $P$. If $\ul D(P)=\ol D (P)$, then the common value is called the \tit{quantization dimension} of $P$ and is denoted by $D(P)$. For any $s\in (0, +\infty)$, the numbers $\liminf_n n^{\frac 2 s} V_n(P)$ and $\limsup_n n^{\frac 2s} V_n(P)$ are, respectively, called the $s$-dimensional \tit{lower} and \tit{upper quantization coefficients} for $P$. If the $s$-dimensional lower and upper quantization coefficients for $P$ are finite and positive, then $s$ coincides with the quantization dimension of $P$ (see \cite{GL1}).

By a probability vector $(p_1, p_2, \cdots, p_N)$ it is meant that $0<p_j<1$ for all $1\leq j\leq N$, and $\sum_{j=1}^N p_j=1$. We now give the following definition.

\begin{defi}
Let $P_1, P_2, \cdots, P_N$ be Borel probability measures on $\D R^d$, and $(p_1, p_2, \cdots, p_N)$ be a probability vector. Then, a Borel probability measure $P$ on $\D R^d$ is called a \tit{mixed probability distribution}, or in short, \tit{mixed distribution}, generated by $(P_1, P_2, \cdots, P_N)$ and the probability vector if for all Borel subsets $A$ of $\D R^d$, $P(A)=p_1P_1(A)+p_2P_2(A)+\cdots+p_NP_N(A)$. Such a mixed distribution is denoted by $P:=p_1P_1+p_2P_2+\cdots+p_NP_N$, and $P_1, P_2, \cdots, P_N$ are called the {\it components} of the mixed distribution.
\end{defi}

The following proposition follows from \cite[Theorem~2.1]{L}.

\begin{prop}  \label{prop1000}
Let $P$ be the mixed distribution generated by $(P_1, P_2)$ associated with the probability vector $(p, 1-p)$, i.e., $P=pP_1+(1-p)P_2$, where $0<p<1$. Assume that both $D(P_1)$ and $D(P_2)$ exist. Then, $D(P)=\max\set{D(P_1), D(P_2)}$.
\end{prop}
In this paper, our goal is to determine the optimal sets of $n$-means, the $n$th quantization errors for all positive integers $n$ and the quantization dimensions, and the quantization coefficients for different mixed distributions. In Section~\ref{sec1}, we have considered a mixed distribution $P:=pP_1+(1-p)P_2$, where $p=\frac 12$, $P_1$ is a uniform distribution on the closed interval $C:=[0, \frac 12]$, and $P_2$ is a discrete distribution on $D:=\set{\frac 23, \frac 56, 1}$. For this mixed distribution, in Subsection~\ref{sec2}, first we have determined the optimal sets of $n$-means and the $n$th quantization errors for $n=2, 3, 4, 5$, and then in Theorem~\ref{th61}, we give a general formula to determine the optimal sets of $n$-means and the $n$th quantization errors for all $n\geq 5$. In Proposition~\ref{prop612}, we further show that the quantization coefficient for this mixed distribution exists as a finite positive number yielding the fact that the quantization dimension for this mixed distribution exists and equals the dimension of the underlying space (see Remark~\ref{rem001}).

 In Section~\ref{sec3}, for a mixed distribution $P:=pP_1+(1-p)P_2$, where $P_1$ is an absolutely continuous probability measure supported by the closed interval $C:=[0, 1]$, and $P_2$ is discrete on $D:=\set{0, 1}$, we mention a rule how to determine the optimal sets of $n$-means. In Proposition~\ref{prop1111}, for a special case, we give a closed formula to determine the optimal sets of $n$-means and the $n$th quantization errors for all $n\geq 2$. As mentioned in Remark~\ref{rem4}, in Proposition~\ref{prop1112}, we have proved a claim that the optimal sets for a mixed distribution may not be unique.

 In Section~\ref{sec5}, we determine the optimal sets of $n$-means, and the $n$th quantization errors for all $n\geq 2$ for a mixed distribution $P:=\frac 12 P_1+\frac 12 P_2$, where $P_1$ is a Cantor distribution with support lying in the closed interval $[0, \frac 12]$, and $P_2$ is discrete on $D:=\set{\frac 23, \frac 56, 1}$. In this section, first we have determined the optimal sets of $n$-means and the $n$th quantization errors for $n=2, 3, 4, 5$, and then in Theorem~\ref{th611}, we give a general formula to determine them for all $n\geq 5$. In Remark~\ref{rem00012}, we show that the quantization dimension for $P$ exists, but the quantization coefficient does not exist. In Section~\ref{sec61}, we give some remarks about the optimal quantization for mixed distributions.

In Section~\ref{sec7}, we consider a mixed distribution $P:=\frac 12 P_1+\frac 12 P_2$, where both $P_1$ and $P_2$ are Cantor distributions. For this mixed distribution, we determine the optimal sets of $n$-means and the $n$th quantization errors for all $n\geq 2$. In Theorem~\ref{Th3}, we show that the quantization coefficient for $P$ does not exist. In Section~\ref{sec8}, we give some discussion and open problems. Finally we would like to mention that the mixed distributions are an exciting new area for optimal quantization, and the results in this paper will give a  motivation and insight into more general problems.

\section{Quantization with $P_1$ uniform and $P_2$ discrete} \label{sec1}
Let $P_1$ be a uniform distribution on the closed interval $C:=[0, \frac 12]$, i.e., $P_1$ is a probability distribution on $\D R$ with probability density function $g$ given by
 \[ g(x)=\left\{\begin{array}{ccc}
2  & \te{ if }x \in C,\\
 0  & \te{ otherwise}.
\end{array}\right.
\]
Let $P_2$ be a discrete probability distribution on $\D R$ with probability mass function $h$ given by $h(x)=\frac 13 $ for $x\in D$, and $h(x)=0$ for $x\in \D R\setminus D$, where $D:=\set{\frac 23, \frac 56, 1}$. Let $P$ be the mixed distribution on $\D R$ such that $P=\frac 12 P_1+\frac 12 P_2$. Notice that the support of $P_1$ is $C$, and the support of $P_2$ is $D$ implying that the support of $P$ is $C\uu D$. Thus, for a Borel subset $A$ of $\D R$, we can write
\[P(A)=\frac 12 P_1(A\ii C)+\frac 12 P_2(A\ii D).\]
We now prove the following lemma.

\begin{lemma}\label{lemma1}
Let $E(X)$ and $V:=V(X)$ represent the expected value and the variance of a random variable $X$ with distribution $P$. Then, $E(X)=\frac{13}{24}$ and $V=\frac{181}{1728}=0.104745$.
\end{lemma}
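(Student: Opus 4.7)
The plan is to use the definition of the mixed distribution together with linearity of expectation, splitting both $E(X)$ and $E(X^2)$ into contributions from the uniform part $P_1$ on $C=[0,\tfrac12]$ and the discrete part $P_2$ on $D=\{\tfrac23,\tfrac56,1\}$. Since $P=\tfrac12 P_1+\tfrac12 P_2$, for any measurable $\vp$ with finite integrals we have $\int\vp\,dP=\tfrac12\int\vp\,dP_1+\tfrac12\int\vp\,dP_2$, so it suffices to compute each piece separately and then apply $V=E(X^2)-E(X)^2$.

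First I would compute $E_{P_1}(X)=\int_0^{1/2} 2x\,dx=\tfrac14$ using the density $g$, and $E_{P_2}(X)=\tfrac13\bigl(\tfrac23+\tfrac56+1\bigr)=\tfrac{5}{6}$ using the mass function $h$. Combining gives $E(X)=\tfrac12\cdot\tfrac14+\tfrac12\cdot\tfrac56=\tfrac{3}{24}+\tfrac{10}{24}=\tfrac{13}{24}$, as claimed.

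Next I would compute the second moments in the same way: $E_{P_1}(X^2)=\int_0^{1/2} 2x^2\,dx=\tfrac{1}{12}$ and $E_{P_2}(X^2)=\tfrac13\bigl(\tfrac{4}{9}+\tfrac{25}{36}+1\bigr)=\tfrac{77}{108}$, so that $E(X^2)=\tfrac12\cdot\tfrac{1}{12}+\tfrac12\cdot\tfrac{77}{108}=\tfrac{43}{108}$. Finally, placing $\tfrac{43}{108}$ and $\bigl(\tfrac{13}{24}\bigr)^2=\tfrac{169}{576}$ over the common denominator $1728$ yields
\[
V=E(X^2)-E(X)^2=\tfrac{688}{1728}-\tfrac{507}{1728}=\tfrac{181}{1728}=0.104745\ldots,
\]
which matches the stated value.

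There is no real obstacle here; the only thing to be careful about is bookkeeping with the common denominator $1728=\operatorname{lcm}(108,576)$ in the final subtraction, and remembering the density normalization $g\equiv 2$ on $C$ (not $1$) so that $P_1$ is a probability measure. Everything else follows from direct computation and the additive decomposition of $P$.
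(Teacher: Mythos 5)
Your proof is correct and follows the same route as the paper: split $E(X)$ and $E(X^2)$ across the two components via $\int \varphi\,dP = \tfrac12\int\varphi\,dP_1 + \tfrac12\int\varphi\,dP_2$, then apply $V = E(X^2) - E(X)^2$. The paper simply carries out the same integrals and sums more compactly; your version makes the intermediate values $E_{P_1}(X)=\tfrac14$, $E_{P_2}(X)=\tfrac56$, $E_{P_1}(X^2)=\tfrac1{12}$, $E_{P_2}(X^2)=\tfrac{77}{108}$ explicit, and all the arithmetic checks out.
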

\nd\tbf{Proof:}  We have
\begin{align*} &E(X)=\int x dP=\frac 12 \int x dP_1+\frac 12 \int x dP_2=\frac 12 \int_{[0, \frac{1}{2}]} x g(x)dx+\frac 12 \sum_{x\in D} x h(x)=\frac{13}{24}, \te{ and } \\ &E(X^2)=\int x^2 dP=\frac 12 \int x^2 dP_1+\frac 12 \int x^2 dP_2=\frac 12 \int_{[0, \frac 12]}x^2 g(x)dx+\frac 12 \sum_{x\in D} x^2 h(x)=\frac{43}{108},
\end{align*}
implying $V:=V(X)=E(X^2)-(E(X))^2 =\frac{43}{108}-\left(\frac{13}{24}\right)^2=\frac{181}{1728}$. Thus, the lemma is yielded.
\qed

\begin{note}
Following the standard rule of probability, we see that $E\|X-a\|^2 =\int(x-a)^2 dP=V(X)+(a-E(X))^2=V+(a-\frac{13}{24})^2$, which yields the fact that the optimal set of one-mean consists of the expected value $\frac {13}{24}$, and the corresponding quantization error is the variance $V$ of the random variable $X$. By $P(\cdot|C)$, we denote the conditional probability measure on $C$, i.e., $P(\cdot|C)=\frac {P(\cdot\ii C)}{P(C)}$, in other words, for any Borel subset $B$ of $C$ we have $P(B|C)=\frac{P(B\ii C)}{P(C)}$. Notice that $P(\cdot|C)$ is a uniform distribution with density function $f$ given by
\[ f(x)=\left\{\begin{array}{ccc}
2  & \te{ if }x \in C,\\
 0  & \te{ otherwise},
\end{array}\right.
\]
implying the fact that $P(\cdot|C)=P_1$. Similarly, $P(\cdot|D)=P_2$. In the sequel, for $n\in \D N$ and $i=1, 2$, by $\ga_n(P_i)$ and $V_n(P_i)$, it is meant the optimal sets of $n$-means and the $n$th quantization error with respect to the probability distributions $P_i$. If nothing is mentioned within a parenthesis, i.e., by $\ga_n$ and $V_n$, it is meant an optimal set of $n$-means and the $n$th quantization error with respect to the mixed distribution $P$.
\end{note}

\begin{prop} \label{prop1111}
Let $P_1$ be the uniform distribution on the closed interval $[a, b]$ and $n\in \D N$. Then, the set $\set{a + \frac{(2i-1)(b-a)}{2n} : 1\leq i\leq n}$ is a unique optimal set of $n$-means for $P_1$, and the corresponding quantization error is given by $V_n(P_1)=\frac{(a-b)^2}{12 n^2}$.
\end{prop}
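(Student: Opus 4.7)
The plan is to combine the centroid condition from Proposition~\ref{prop0}(iii) with a strict-convexity argument applied to the lengths of the Voronoi cells. Let $\ga_n = \{a_1 < a_2 < \cdots < a_n\}$ be any optimal set of $n$-means for $P_1$. Because $P_1$ is supported on $[a,b]$, each Voronoi region intersected with $[a,b]$ is a subinterval, and the centroid condition forces $a \leq a_1$ and $a_n \leq b$. I would set $m_0 := a$, $m_n := b$, and $m_i := (a_i + a_{i+1})/2$ for $1 \leq i \leq n-1$, so that $M(a_i|\ga_n) \cap [a,b] = [m_{i-1}, m_i]$. Since $P_1$ has constant density $1/(b-a)$ on $[a,b]$, the centroid of $[m_{i-1}, m_i]$ under $P_1$ is $(m_{i-1}+m_i)/2$; Proposition~\ref{prop0}(iii) then yields $a_i = (m_{i-1} + m_i)/2$ for every $i$.

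With this setup in hand, write $L_i := m_i - m_{i-1}$, so that $\sum_{i=1}^n L_i = b - a$. The centroid condition turns each piece of the integral into a centered one, and a direct calculation gives
\[
\int_{m_{i-1}}^{m_i} (x - a_i)^2 \, \frac{dx}{b-a} = \frac{L_i^3}{12(b-a)},
\]
so the total distortion is $V(\ga_n) = \frac{1}{12(b-a)} \sum_{i=1}^n L_i^3$. Minimizing this over $(L_1, \ldots, L_n) \in [0, \infty)^n$ subject to $\sum L_i = b-a$ is then a direct application of Jensen's inequality for the strictly convex function $t \mapsto t^3$ on $[0, \infty)$: one obtains $\sum L_i^3 \geq (b-a)^3/n^2$, with equality if and only if $L_i = (b-a)/n$ for all $i$. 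This immediately gives $V_n(P_1) = (b-a)^2/(12 n^2)$, and the back-substitution $a_i = m_{i-1} + L_i/2$ recovers $a_i = a + (2i-1)(b-a)/(2n)$.

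The one step requiring care, and hence the main obstacle, is verifying that the optimal configuration places the outer cells flush against the endpoints, so that no mass of $P_1$ is missed in the decomposition into $[m_{i-1}, m_i]$. Concretely, one needs $a_1 \geq a$ and $a_n \leq b$, which follows from Proposition~\ref{prop0}(iii) because the centroid of any Borel subset of $[a,b]$ lies in $[a,b]$. Uniqueness is then automatic from the strict inequality in Jensen's inequality: any optimal $\ga_n$ forces $L_1 = \cdots = L_n$, which together with the centroid formula pins down each $a_i$ uniquely.
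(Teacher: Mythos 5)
Your proof is correct and is genuinely more complete than the paper's. The paper simply \emph{asserts} that ``the boundaries of the Voronoi regions of an optimal set of $n$-means will divide the interval $[a,b]$ into $n$ equal subintervals,'' offers no justification for this, and then computes the resulting error; the entire content of the proposition is thus left resting on an unproved claim. You, by contrast, derive the equal-spacing result. You combine the centroid condition from Proposition~\ref{prop0}(iii) with the fact that each Voronoi cell intersected with $[a,b]$ is a subinterval, which together reduce the distortion at any optimal set to $\frac{1}{12(b-a)}\sum_{i=1}^n L_i^3$ where $L_i$ are the cell lengths, and then invoke strict convexity of $t\mapsto t^3$ via Jensen to conclude that the minimum over $\sum L_i = b-a$ is attained uniquely at $L_i = (b-a)/n$. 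The sandwich argument --- the equal-spacing configuration gives an upper bound on $V_n$ while Jensen gives a matching lower bound for the distortion of any optimal configuration --- is sound and also delivers uniqueness for free from the equality case of Jensen, which the paper again only asserts. What your version buys is an actual proof; what the paper's version buys is brevity at the cost of rigor.
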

\nd\tbf{Proof:}  Notice that the probability density function $g$ of $P_1$ is given by  \[ g(x)=\left\{\begin{array}{ccc}
\frac 1{b-a}  & \te{ if }x \in [a, b],\\
 0  & \te{ otherwise}.
\end{array}\right.
\]
Since $P_1$ is uniformly distributed on $[a, b]$, the boundaries of the Voronoi regions of an optimal set of $n$-means will divide the interval $[a, b]$ into $n$ equal subintervals, i.e., the boundaries of the Voronoi regions are given by
\[\left\{a, \ a+\frac {(b-a)}{n}, \ a+ \frac {2(b-a)}{n},\ \cdots \ a+ \frac {(n-1)(b-a)}{n}, \ a+ \frac {n(b-a)} {n}\right\}.\]
This implies that an optimal set of $n$-means for $P_1$ is unique, and it consists of the midpoints of the boundaries of the Voronoi regions, i.e., the optimal set of $n$-means for $P_1$ is given by $\ga_n(P_1):=\set{a+\frac{(2i-1)(b-a)}{2n} : 1\leq i\leq n}$ for any $n\geq 1$. Then, the $n$th quantization error for $P_1$ due to the set $\ga_n(P_1)$ is given by
\begin{align*}
&V_n(P_1)=n \int_{[a, a+\frac {b-a}{n}]} \Big(x-(a+\frac {b-a}{2n})\Big)^2  dP_1=n \int_{[0, \frac 1{2n}]} \frac 1{b-a} (x-\frac 1{4n})^2  dx=\frac{(a-b)^2}{12 n^2},
\end{align*}
which yields the proposition.
\qed

\begin{cor} \label{cor1}
Let $P_1$ be the uniform distribution on the closed interval $[0, \frac 12]$ and $n\in \D N$. Then, the set $\set{\frac{2i-1}{4n} : 1\leq i\leq n}$ is a unique optimal set of $n$-means for $P_1$, and the corresponding quantization error is given by $V_n(P_1)=\frac 1{48n^2}$.
\end{cor}

\begin{rem}
Notice that if $\gb\sci \D R$, then
\begin{align*}
&\int \min_{b \in \gb}\|x-b\|^2 dP=\frac 12 \int_{[0, \frac 12]} \min_{b \in \gb}(x-b)^2 g(x) dx+\frac 12 \sum_{x\in D}  \min_{b \in \gb}(x-b)^2h(x), \te{ and so, }
\end{align*}
\begin{equation} \label{eq001} \int \min_{b \in \gb}\|x-b\|^2 dP=\int_{[0, \frac 12]} \min_{b \in \gb}(x-b)^2 dx+\frac 16 \sum_{x\in D}  \min_{b \in \gb}(x-b)^2. \end{equation}
\end{rem}

\subsection{Optimal sets of $n$-means and the errors for all $n\geq 2$} \label{sec2}

In this subsection, we first determine the optimal sets of $n$-means and the $n$th quantization error for the mixed distribution $P$. Then, we show that the quantization dimension of $P$ exists and equals the quantization dimension of $P_1$, which again equals one, which is the dimension of the underlying space. To determine the distortion error in this subsection we will frequently use equation~\eqref{eq001}.
\begin{lemma1}
Let $\ga$ be an optimal set of two-means. Then, $\ga=\set{\frac 14, \frac 56}$ with quantization error $V_2=\frac{17}{864}=0.0196759.$
\end{lemma1}
\nd\tbf{Proof:}  Consider the set of two-points $\gb$ given by $\gb:=\set{\frac 14, \frac 56}$. Then, the distortion error is
\[\int \min_{b \in \gb}\|x-b\|^2 dP= \int_{[0, \frac 12]}(x-\frac 14)^2 dx+\frac 16 \sum_{x\in D} (x-\frac 56)^2=\frac{17}{864}=0.0196759.\]
Since $V_2$ is the quantization error for two-means we have $V_2\leq 0.0196759$. Let $\ga:=\set{a_1, a_2}$ be an optimal set of two-means with $a_1<a_2$. Since the optimal points are the centroids of their own Voronoi regions, we have $0<a_1<a_2\leq 1$. If $\frac{13}{32}\leq a_1$, then
\[V_2\geq \int_{[0, \frac{13}{32}]} (x-\frac{13}{32})^2  dx=\frac{2197}{98304}=0.022349>V_2,\]
which is a contradiction. So, we can assume that $a_1\leq \frac{13}{32}$. We now show that the Voronoi region of $a_1$ does not contain any point from $D$. For the sake of contradiction, assume that the Voronoi region of $a_1$ contains points from $D$. Then, the following two cases can arise:

Case~1. $\frac 23 \leq \frac 12(a_1+a_2)<\frac 56$.

Then, $a_1=E(X : X \in C \uu\set{\frac 23})=\frac{17}{48}$  and $a_2=E(X : X\in \set{\frac 56, 1})=\frac{11}{12}$, and so
$\frac 12(a_1+a_2)=\frac{61}{96}<\frac 23$, which is a contradiction.

Case~2. $\frac 56 \leq \frac 12(a_1+a_2)<1$.

Then, $a_1=E(X : X \in C \uu\set{\frac 23, \frac 56})=\frac{9}{20}$  and $a_2=1$, and so
$\frac 12(a_1+a_2)=\frac{29}{40}<\frac 56$, which is a contradiction.

By Case~1 and Case~2, we can assume that the Voronoi region of $a_1$ does not contain any point from $D$. We now show that the Voronoi region of $a_2$ does not contain any point from $C$. Suppose that the Voronoi region of $a_2$ contains points from $C$. Then, the distortion error is given by
\begin{align*} &\int_{[0, \frac 12(a_1+a_2)]}(x-a_1)^2 dx+\int_{[\frac 12(a_1+a_2), \frac 12]}(x-a_2)^2 dx+\frac 16 \sum_{x\in D}(x-a_2)^2 \\
&=\frac{1}{108} \left(27 a_1^3+27 a_1^2 a_2-27 a_1 a_2^2-27 a_2^3+108 a_2^2-117 a_2+43\right),
\end{align*}
which is minimum when $a_1=\frac{5}{24}$ and $a_2=\frac{19}{24}$, and the minimum value is $\frac{37}{1728}=0.021412>V_2$, which leads to a contradiction. So, we can assume that the Voronoi region of $a_2$ does not contain any point from $C$. Thus, we have $a_1=\frac 14$ and $a_2=\frac 56$, and the corresponding quantization error is $V_2=\frac{17}{864}=0.0196759$. This, completes the proof of the lemma.
\qed

\begin{lemma1}
Let $\ga$ be an optimal set of three-means. Then, $\ga=\set{0.191074, 0.573223, \frac {11}{12}}$ with quantization error $V_3=0.0106152.$
\end{lemma1}
\nd\tbf{Proof:}
Let us consider the set of three-points $\gb:=\set {0.191074, 0.573223, \frac {11}{12}}$. Since $0.382149=\frac{1}{2} (0.191074\, +0.573223)<\frac{1}{2}<\frac{2}{3}<\frac{1}{2} \left(0.573223\, +\frac{11}{12}\right)=0.744945<\frac{5}{6}$, the distortion error due to the set $\gb$ is given by \begin{align*}& \int \min_{b \in \gb}\|x-b\|^2 dP= \int_{[0, \, 0.382149]}(x-0.191074)^2 dx+\int_{[0.382149, \, \frac 12]}(x-0.573223)^2 dx\\
&+\frac 16 (\frac 23-0.573223)^2 +\frac 16(\frac 56-\frac {11}{12})^2+\frac 16(1-\frac {11}{12})^2=0.0106152.
\end{align*}
Since $V_3$ is the quantization error for three-means, we have $V_3\leq 0.0106152$. Let $\ga:=\set{a_1, a_2, a_3}$ be an optimal set of three-means with $a_1<a_2<a_3$. Since the optimal points are the centroids of their own Voronoi regions, we have $0<a_1<a_2<a_3\leq 1$. If $\frac{3}{8}\leq a_1$, then
\[V_3\geq \int_{[0, \frac 3 8]}(x-\frac 38)^2 dx=\frac{9}{512}=0.0175781>V_3,\]
which leads to a contradiction. So, we can assume that $a_1<\frac 38$. If the Voronoi region of $a_2$ does not contain any point from $C$, then as the points of $D$ are equidistant from each other with equal probability, we will have either $a_2=\frac 12(\frac 23+\frac 56)=\frac 34$ and $a_3=1$, or $a_2=\frac 23$ and $a_3=\frac 12(\frac 56+1)=\frac {11}{12}$. In any case, the distortion error is
\[\int_{[0, \frac 12]}(x-\frac 14)^2 dx+\frac 16((\frac 23-\frac 34)^2+(\frac 56-\frac 34)^2)=\frac{11}{864}=0.0127315>V_3,\]
which is a contradiction. So, we can assume that the Voronoi region of $a_2$ contains points from $C$. If the Voronoi region of $a_2$ does not contain any point from $D$, we must have $a_1=\frac 18$, $a_2=\frac 38$, and $a_3=\frac 56$. Then, the distortion error is
\[\int_{[0, \frac 14]}(x-\frac 18)^2 dx+\int_{[\frac 14, \frac 12]}(x-\frac 38)^2 dx+\frac 16((\frac 23-\frac 56)^2+(1-\frac 56)^2)=\frac{41}{3456}=0.0118634>V_3,\]
which leads to a contradiction. Therefore, we can assume that the Voronoi region of $a_2$ contains points from $C$ as well as from $D$. We now show that the Voronoi region of $a_2$ contains only the point $\frac 23$ from $D$. On the contrary, assume that the Voronoi region of $a_2$ contains the points $\frac 23$ and $\frac 56$ from $D$. Then, we must have $a_3=1$, and so the distortion error is
\begin{align*} &\int_{[0, \frac{a_1+a_2}{2}]} (x-a_1)^2 \, dx+\int_{[\frac{a_1+a_2}{2}, \frac{1}{2}]} (x-a_2)^2 \, dx+\frac{1}{6}\Big( (\frac{2}{3}-a_2)^2+(\frac{5}{6}-a_2)^2\Big)\\
&=\frac{1}{108} \Big(27 a_1^3+27 a_1^2 a_2-27 a_1 a_2^2-27 a_2^3+90 a_2^2-81 a_2+25\Big),
\end{align*}
 which is minimum when $a_1=\frac 14$ and $a_2=\frac 34$, and the minimum value is $\frac{11}{864}=0.0127315>V_3$, which is a contradiction. Therefore, the Voronoi region of $a_2$ contains only the point $\frac 23$ from $D$. This implies $a_3=\frac 12(\frac 56+1)=\frac {11}{12}$, and then the distortion error is
\begin{align*} & \int_{[0, \frac{a_1+a_2}{2}]} (x-a_1)^2 \, dx+\int_{[\frac{a_1+a_2}{2}, \frac{1}{2}]} (x-a_2)^2 \, dx+\frac{1}{6} (\frac{2}{3}-a_2)^2+\frac{1}{6} \left((1-\frac{11}{12})^2+(\frac{5}{6}-\frac{11}{12})^2\right)\\
&=\frac{1}{144} \left(36 a_1^3+36 a_1^2 a_2-36 a_1 a_2^2-36 a_2^3+96 a_2^2-68 a_2+17\right),
\end{align*}
which is minimum when $a_1=0.191074$ and $a_2=0.573223$, and the corresponding distortion error is $V_3=0.0106152$. Moreover, we have seen $a_3=\frac{11}{12}$. Thus, the proof of the lemma is complete.
\qed

\begin{lemma1}
Let $\ga$ be an optimal set of four-means. Then, $\ga=\set{\frac 14, \frac 38, \frac 34, 1}$, or $\ga=\set{\frac 14, \frac 38, \frac 23, \frac {11}{12}}$, and the quantization error is $V_4=\frac{17}{3456}=0.00491898$.
\end{lemma1}
\nd\tbf{Proof:}
Let us consider the set of four-points $\gb:=\set{\frac 14, \frac 38, \frac 34, 1}$. Then, the distortion error due to the set $\gb$ is
 \begin{align*}& \int \min_{b \in \gb}\|x-b\|^2 dP=\int_{[0, \frac 14]}(x-\frac 18)^2 dx+\int_{[\frac 14, \frac 12]}(x-\frac 38)^2 dx+\frac 16\Big((\frac 23-\frac 34)^2+(\frac 56-\frac 34)^2\Big)=\frac{17}{3456}.
\end{align*}
Since $V_4$ is the quantization error for four-means, we have $V_4\leq\frac{17}{3456}=0.00491898$. Let $\ga:=\set{a_1<a_2<a_3<a_4}$ be an optimal set of four-means. Since the optimal points are the centroids of their own Voronoi regions, we have $0<a_1<\cdots<a_4\leq 1$.
If the Voronoi region of $a_2$ does not contain points from $C$, then
\[V_4\geq \int_{[0, \frac 12]}(x-\frac 14)^2 dx=\frac 1{96}=0.0104167>V_4,\]
which gives a contradiction, and so, we can assume that the Voronoi region of $a_2$ contains points from $C$. If the Voronoi region of $a_2$ contains points from $D$, then it can contain only the point $\frac 23$ from $D$, and in that case $a_3=\frac 56$ and $a_4=1$, which leads to the distortion error as
\begin{align*}
&\int_{[0, \frac{a_1+a_2}{2}]} (x-a_1)^2 \, dx+\int_{[\frac{a_1+a_2}{2}, \frac{1}{2}]} (x-a_2)^2 \, dx+\frac{1}{6} (\frac{2}{3}-a_2)^2\\
&=\frac{1}{216} \left(54 a_1^3+54 a_1^2 a_2-54 a_1 a_2^2-54 a_2^3+144 a_2^2-102 a_2+25\right),
\end{align*}
which is minimum when $a_1=0.191074$ and $a_2=0.573223$, and then, the minimum value is $0.00830043>V_4$, which is a contradiction. So, the Voronoi region of $a_2$ does not contain any point from $D$.
If the Voronoi region of $a_3$ does not contain any point from $D$, then $a_4=\frac 56$ yielding
\[V_4\geq \frac 16\Big((\frac 23-\frac 56)^2+(1-\frac 56)^2\Big)=\frac{1}{108}=0.00925926>V_4,\]
which leads to a contradiction. So, the Voronoi region of $a_3$ contains at least one point from $D$. Suppose that the Voronoi region of $a_3$ contains points from $C$ as well. Then, the following two cases can arise:

Case~1. $\frac 23 \in M(a_3|\ga)$.

Then, $a_4=\frac{11}{12}$, and the distortion error is
\begin{align*}
&\int_{[0, \frac{a_1+a_2}{2}]} (x-a_1)^2 \, dx+\int_{[\frac{a_1+a_2}{2}, \frac{a_2+a_3}{2}]} (x-a_2)^2 \, dx+\int_{[\frac{a_2+a_3}{2}, \frac{1}{2}]} (x-a_3)^2 \, dx+\frac{1}{6} (\frac{2}{3}-a_3)^2\\
&\qquad \qquad \qquad +\frac{1}{6} \Big((1-\frac{11}{12})^2+(\frac{5}{6}-\frac{11}{12})^2\Big)\\
&=\frac{1}{144} \Big(36 a_1^3+36 a_1^2 a_2-36 a_1 a_2^2+4 (9 a_2^2-17) a_3+(96-36 a_2) a_3^2-36 a_3^3+17\Big)
\end{align*}
which is minimum if $a_1= 0.118238$, $a_2= 0.354715$, and $a_3=0.645285$, and the minimum value is $0.00506623>V_4$, which is a contradiction.

Case~2. $\set{\frac 23, \frac 56} \sci M(a_3|\ga)$.

Then, $a_4=1$, and the corresponding distortion error is
\begin{align*}
&\int_{[0, \frac{a_1+a_2}{2}]} (x-a_1)^2 \, dx+\int_{[\frac{a_1+a_2}{2}, \frac{a_2+a_3}{2}]} (x-a_2)^2 \, dx+\int_{[\frac{a_2+a_3}{2}, \frac{1}{2}]} (x-a_3)^2 \, dx\\
&+\frac{1}{6} \Big((\frac{2}{3}-a_3)^2+(\frac{5}{6}-a_3)^2\Big)\\
&=\frac{1}{108} \Big(27 a_1^3+27 a_1^2 a_2-27 a_1 a_2^2+27 (a_2^2-3) a_3+(90-27 a_2) a_3^2-27 a_3^3+25\Big),
\end{align*}
which is minimum if $a_1=0.0990219, \, a_2=0.297066$, and $a_3=0.702934$, and the minimum value is $0.00680992>V_4$, which gives a contradiction.

By Case~1 and Case~2, we can assume that the Voronoi region of $a_3$ does not contain any point from $C$. Thus, we have $(a_1=\frac 14$, $a_2=\frac 38$, $a_3=\frac 34$, and $a_4=1$), or $(a_1=\frac 14$, $a_2=\frac 38$, $a_3=\frac 23$, and $a_4=\frac {11}{12})$, and the corresponding quantization error is $V_4=\frac{17}{3456}=0.00491898$.
\qed

\begin{lemma1} \label{lemma5111}
Let $\ga$ be an optimal set of five-means. Then, $\ga=\set{\frac 18, \frac 38, \frac 23, \frac 56, 1}$, and the corresponding quantization error is $V_5=\frac{1}{384}=0.00260417$.
\end{lemma1}
\nd\tbf{Proof:}  Consider the set of five points $\gb:=\set{\frac 14, \frac 38, \frac 23, \frac 56, 1}$. The distortion error due to the set $\gb$ is given by
\begin{align*}& \int \min_{b \in \gb}\|x-b\|^2 dP=\int_{[0, \frac 14]}(x-\frac 18)^2 dx+\int_{[\frac 14, \frac 12]}(x-\frac 38)^2 dx=\frac{1}{384}=0.00260417.
\end{align*}
Since $V_5$ is the quantization error for five-means, we have $V_5\leq 0.00260417$. Let $\ga:=\set{a_1<a_2<a_3<a_4<a_5}$ be an optimal set of five-means. Since the optimal points are the centroids of their own Voronoi regions, we have $0<a_1<\cdots<a_5\leq 1$. If the Voronoi region of $a_3$ does not contain any point from $D$, then we must have $a_1=\frac{1}{12}$, $a_2=\frac 14$, $a_3=\frac 5{12}$, $a_4=\frac 34$, and $a_4=1$, or $a_1=\frac{1}{12}$, $a_2=\frac 14$, $a_3=\frac 5{12}$, $a_4=\frac 23$, and $a_4=\frac{11}{12}$ yielding the distortion error
\[3 \int_{[0, \frac 16]}(x-\frac 1{12})^2 dx +\frac{1}{6} \Big((\frac{2}{3}-\frac{3}{4})^2+(\frac{5}{6}-\frac{3}{4})^2\Big)=\frac{1}{288}=0.00347222>V_5,\]
which is a contradiction. So, we can assume that the Voronoi region of $a_3$ contains a point from $D$. In that case, we must have $a_4=\frac 56$ and $a_5=1$. Suppose that the Voronoi region of $a_3$ contains points from $C$ as well. Then, the distortion error is
\begin{align*}
&\int_{[0, \frac{a_1+a_2}{2}]} (x-a_1)^2 \, dx+\int_{[\frac{a_1+a_2}{2}, \frac{a_2+a_3}{2}]} (x-a_2)^2 \, dx+\int_{[\frac{a_2+a_3}{2}, \frac{1}{2}]} (x-a_3)^2 \, dx+\frac{1}{6} \Big(\frac{2}{3}-a_3\Big)^2\\
&=\frac{1}{216} \left(54 a_1^3+54 a_1^2 a_2-54 a_1 a_2^2+6 \left(9 a_2^2-17\right) a_3-18 (3 a_2-8) a_3^2-54 a_3^3+25\right),
\end{align*}
which is minimum if $a_1=0.118238$, $a_2=0.354715$, and $a_3=0.645285$, and the minimum value is $0.00275142>V_5$, which is a contradiction. So, the Voronoi region of $a_3$ does not contain any point from $C$ yielding $a_1=\frac 18$, $a_2=\frac 38$, $a_3=\frac 23$, $a_4=\frac 56$ and $a_5=1$, and the corresponding quantization error is $V_5=\frac{1}{384}=0.00260417$. Thus, the proof of the lemma is complete.
\qed

\begin{theo1} \label{th61}
Let $n\in \D N$ and $n\geq 5$, and let $\ga_n$ be an optimal set of $n$-means for $P$ and $\ga_n(P_1)$ be the optimal set of $n$-means with respect to $P_1$. Then,
\[\ga_n(P)=\ga_{n-3}(P_1)\uu D, \te{ and } V_n(P)=\frac 12 V_{n-3}(P_1).\]
\end{theo1}

\nd\tbf{Proof:}  If $n=5$, by Lemma~\ref{lemma5111}, we have $\ga_5(P)=\set{\frac 18, \frac 38, \frac 23, \frac 56, 1}$ and $V_5(P)=\frac 1{384}$, which by Corollary~\ref{cor1} yields that $\ga_5(P)=\ga_2(P_1)\uu D$ and $V_5(P)=\frac 12 V_2(P_1)$, i.e., the theorem is true for $n=5$. Proceeding in the similar way, as Lemma~\ref{lemma5111}, we can show that the theorem is true for $n=6$ and $n=7$. We now show that the theorem is true for all $n\geq 8$. Consider the set of eight points $\gb:=\set{\frac 1{20}, \frac{3}{20}, \frac 1 4, \frac 7{20}, \frac{9}{20}, \frac 23, \frac 56, 1}$. The distortion error due to set $\gb$ is given by
\[\int \min_{b \in \gb}\|x-b\|^2 dP=5 \int_{[0, \frac 1{10}]}(x-\frac1 {20})^2 dx=\frac{1}{2400}=0.000416667.\]
Since $V_n$ is the $n$th quantization error for $n$-means for $n\geq 8$, we have $V_n\leq V_8\leq 0.000416667$. Let $\ga_n:=\set{a_1<a_2<\cdots <a_n}$ be an optimal set of $n$-means for $n\geq 8$, where $0<a_1<\cdots<a_n\leq 1$. To prove the first part of the theorem, it is enough to show that $M(a_{n-2}|\ga_n)$ does not contain any point from $C$, and $M(a_{n-3}|\ga_n)$
 does not contain any point from $D$. If $M(a_{n-2}|\ga_n)$ does not contain any point from $D$, then
 \[V_n\geq \frac 16\Big((\frac 23-\frac 34)^2+(\frac 56-\frac 34)^2\Big)=\frac{1}{432}=0.00231481>V_n,\]
which leads to a contradiction. So, $M(a_{n-2}|\ga_n)$ contains a point, in fact the point $\frac 23$, from $D$. If $M(a_{n-2}|\ga_n)$ does not contain points from $C$, then $a_{n-2}=\frac 23$. Suppose that $M(a_{n-2}|\ga_n)$ contains points from $C$. Then, $\frac 23\leq \frac 12(a_{n-2}+a_{n-1})$ implies $a_{n-2}\geq \frac 43-a_{n-1}=\frac 43-\frac 56=\frac 12$. The following three cases can arise:

Case~1. $\frac 12\leq a_{n-2}\leq \frac 7{12}$.

Then, $V_n\geq \frac 16 (\frac 23-\frac 7{12})^2=\frac{1}{864}=0.00115741>V_n,$ which is a contradiction.

Case~2. $\frac 7{12}\leq a_{n-2}\leq \frac 58$.

Then, $\frac 12(a_{n-3}+a_{n-2})<\frac 12$ implying $a_{n-3}<1-a_{n-2}\leq 1-\frac 7{12}=\frac 5{12}$, and so
\[V_n\geq \int_{[\frac{5}{12}, \frac{1}{2}]} \Big(x-\frac{5}{12}\Big)^2 \, dx+\frac{1}{6} \Big(\frac{2}{3}-\frac{5}{8}\Big)^2=\frac{5}{10368}=0.000482253>V_n,\]
which leads to a contradiction.

Case~3. $\frac 58 \leq a_{n-2}$.

Then, $\frac 12(a_{n-3}+a_{n-2})<\frac 12$ implying $a_{n-3}<1-a_{n-2}\leq 1-\frac 5{8}=\frac 38$, and so
\[V_n\geq \int_{[\frac{3}{8}, \frac{1}{2}]} \Big(x-\frac{3}{8}\Big)^2 \, dx=\frac{1}{1536}=0.000651042>V_n,\]
which gives contradiction.

Thus, in each case we arrive at a contradiction yielding the fact that $M(a_{n-2}|\ga_n)$ does not contain any point from $C$. If $M(a_{n-3}|\ga)$ contains any point from $D$, say $\frac 23$, then we will have
\[M(a_{n-2}|\ga)\uu M(a_{n-1}|\ga)\uu M(a_n|\ga)=\set{\frac 56, 1},\]
which by Proposition~\ref{prop0} implies that either $a_{n-2}=a_{n-1}=\frac 56$, and $a_n=1$, or $a_{n-2}=\frac 56$, and $a_{n-1}=a_n=1$, which contradicts the fact that $0<a_1<\cdots<a_{n-2}<a_{n-1}<a_n\leq 1$. Thus, $M(a_{n-3}|\ga)$ does not contain any point from $D$. Hence, $\ga_n(P)=\ga_{n-3}(P_1)\uu D$, and so,
 \[V_n(P)=\int_{C}\min_{a\in\ga_{n-3}(P_1)}(x-a)^2 dx+\frac 16 \sum_{x\in D}\min_{a\in D}(x-a)^2=\frac 12\int_{C}\min_{a\in\ga_{n-3}(P_1)}(x-a)^2 2dx\]
 implying $V_n(P)=\frac 12 V_{n-3}(P_1)$. Thus, the proof of the theorem is complete.
\qed
\begin{propo} \label{prop612}
Let $P$ be the mixed distribution as defined before. Then,
\[\lim_{n\to \infty} n^2 V_n(P)=\frac 1{96}.\]
\end{propo}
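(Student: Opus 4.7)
The plan is to combine Theorem~\ref{th61} with the closed-form expression for $V_n(P_1)$ from Corollary~\ref{cor1}. First I would invoke Theorem~\ref{th61}, which tells us that for every $n \geq 5$,
\[V_n(P)=\tfrac{1}{2} V_{n-3}(P_1).\]
Next I would substitute the value $V_{n-3}(P_1)=\frac{1}{48(n-3)^2}$ provided by Corollary~\ref{cor1} (applied with $a=0$, $b=\frac12$ and index $n-3$). This yields the exact formula
\[V_n(P)=\frac{1}{96(n-3)^2} \qquad \text{for all } n\geq 5.\]

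With this closed form in hand, the limit is immediate: for $n\geq 5$,
\[n^2 V_n(P)=\frac{n^2}{96(n-3)^2}=\frac{1}{96}\cdot\left(\frac{n}{n-3}\right)^2,\]
and since $\frac{n}{n-3}\to 1$ as $n\to\infty$, we conclude $\lim_{n\to\infty} n^2 V_n(P)=\frac{1}{96}$.

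There is essentially no obstacle here, since Theorem~\ref{th61} has already done the hard combinatorial work of identifying the optimal sets for large $n$ as the union of the optimal set for the uniform component with the support $D$ of the discrete component; the present proposition is a direct asymptotic consequence. The only point worth noting is that the identity $V_n(P)=\tfrac{1}{2}V_{n-3}(P_1)$ is valid only for $n\geq 5$, but this restriction is harmless for a limit statement as $n\to\infty$.
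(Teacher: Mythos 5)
Your proof is correct and follows the same route as the paper: invoke Theorem~\ref{th61} to get $V_n(P)=\tfrac12 V_{n-3}(P_1)$ for $n\geq 5$, substitute the closed form $V_{n-3}(P_1)=\tfrac{1}{48(n-3)^2}$ from Corollary~\ref{cor1}, and take the limit. You simply spell out the intermediate closed-form expression $V_n(P)=\tfrac{1}{96(n-3)^2}$ more explicitly, which the paper leaves implicit.
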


\nd\tbf{Proof:}  By Corollary~\ref{cor1} and Theorem~\ref{th61}, we have
\[\lim_{n\to \infty} n^2 V_n(P)=\frac 12 \lim_{n\to \infty} n^2 V_{n-3} (P_1)=\frac12 \lim_{n\to \infty} \frac {n^2} {48(n-3)^2}=\frac 1{96},\]
and thus, the proposition is yielded.
\qed
\begin{rem}\label{rem001}
By Proposition~\ref{prop612}, it follows that $\mathop{\lim}\limits_{n\to\infty} n^2 V_n(P)=\frac 1{96}$, i.e., one-dimensional quantization coefficient for the mixed distribution $P$ is finite and positive implying the fact that the quantization dimension of the mixed distribution $P$ exists, and equals one, which is the dimension of the underlying space. It is known that for a probability measure $P$ on $\D R^d$ with non-vanishing absolutely continuous part $\mathop{\lim}\limits_{n\to\infty} n^{\frac 2d} V_n(P)$ is finite and strictly positive, i.e., the quantization dimension of $P$ exists, and equals the dimension $d$ of the underlying space (see \cite{BW}). Thus, for the mixed distribution $P$ considered in this section, we see that $D(P)=D(P_1)=1$.
\end{rem}

\section{A rule to determine optimal quantizers} \label{sec3}
Let $0<p<1$ be fixed. Let $P$ be a mixed distribution given by $P=pP_1+(1-p)P_2$  with the support of $P_1$ equals $C$ and the support of $P_2$ equals $D$, such that $P_1$ is continuous on $C$, and $P_2$ is discrete on $D$, and $D\sci C$. It is well-known that the optimal set of one-mean consists of the expected value and the corresponding quantization error is the variance $V$ of the $P$-distributed random variable $X$. Assume that $P_1$ is absolutely continuous on $C:=[0, 1]$, and $P_2$ is discrete on $D:=\set{0, 1}$. Then, in the following note we give a rule how to obtain the optimal sets of $n$-means for the mixed distribution $P$ for any $n\geq 2$.
\begin{note}\label{note2222}
Let $\ga_n:=\set{a_1, a_2, \cdots, a_n}$ be an optimal set of $n$-means for $P$ such that $0\leq a_1<a_2<\cdots<a_n\leq 1$. Write
\begin{align}\label{eq234}
M(a_i|\ga_n):=\left\{\begin{array}{cc}
\left[0, \frac{a_1+a_2}{2}\right] & \te{ if } i=1, \\
\left[\frac{a_{i-1}+a_i}{2}, \frac{a_i+a_{i+1}}{2}\right] & \te{ if } 2 \leq i \leq n-1, \\
 \left[\frac{a_{n-1}+a_n}{2}, 1\right] & \te{ if } i=n,
\end{array}
\right.
\end{align}
where $M(a_i|\ga_n)$ represent the Voronoi regions of $a_i$ for all $1\leq i\leq n$ with respect to the set $\ga_n$.
Since the optimal points are the centroids of their own Voronoi regions, we have $a_i=E(X : X \in M(a_i|\ga_n))$ for all $1\leq i\leq n$. Solving the $n$ equations one can obtain the optimal sets of $n$-means for the mixed distribution $P$. Once, an optimal set of $n$-means is known, the corresponding quantization error can easily be determined.
\end{note}
Let us now give the following proposition.

\begin{prop} \label{prop1111}
Let $\ga_n$ be an optimal set of $n$-means and $V_n$ is the corresponding quantization error for $n\geq 2$ for the mixed distribution $P:=\frac 12 P_1+\frac 12 P_2$ such that $P_1$ is uniformly distributed on $C:=[0, 1]$ with probability density function $g$ given by
\[ g(x)=\left\{\begin{array}{ccc}
1 & \te{ if }x \in C,\\
 0  & \te{ otherwise},
\end{array}\right.
\] and $P_2$ is discrete on $D:=\set{1}$ with  mass function $h$ given by  $h(1)=1$. Then, for $n\geq 2$,
\[\ga_n:=\left\{\frac{(2 i-1) \left(-\sqrt{n^2-n+1}+2 n-1\right)}{2 (n-1) n} : 1\leq i\leq n \right\}\]
and $V_n=\frac{4 n^2-4 \left(\sqrt{n^2-n+1}+1\right) n+2 \sqrt{n^2-n+1}+7}{12 \left(\sqrt{n^2-n+1}+2 n-1\right)^2}.$
\end{prop}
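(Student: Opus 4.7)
The plan is to apply the centroid characterization from Note~\ref{note2222} together with Proposition~\ref{prop0}. Since $D=\{1\}$ is a single atom of positive $P$-mass and $a_n$ is the largest generator, a nearest-neighbour argument forces $1\in M(a_n|\ga_n)$; consequently, for $1\le i\le n-1$ the region $M(a_i|\ga_n)$ meets $\mathrm{supp}(P)$ only in the uniform part, and the centroid equation $a_i=E(X\mid X\in M(a_i|\ga_n))$ reduces to $a_i$ being the midpoint of the corresponding subinterval of $[0,1]$.

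From $a_1=\tfrac{1}{2}\cdot\tfrac{a_1+a_2}{2}$ one obtains $a_2=3a_1$, and for $2\le i\le n-1$ the equation $a_i=\tfrac{1}{4}(a_{i-1}+2a_i+a_{i+1})$ gives the arithmetic-progression relation $a_{i+1}-a_i=a_i-a_{i-1}$. Writing $c:=a_1$, a short induction (the instance $i=n-1$ supplies $a_n$) yields $a_i=(2i-1)c$ for every $1\le i\le n$. To pin down $c$, I would expand the centroid equation for $a_n$ on $M(a_n|\ga_n)=[2(n-1)c,1]$, which carries both the uniform piece and the atom at $1$ of weight $\tfrac12$; writing $a_n=(2n-1)c$ and clearing denominators produces the quadratic
\[
4n(n-1)c^{2}-4(2n-1)c+3=0,
\]
with roots $c=\frac{(2n-1)\pm\sqrt{n^{2}-n+1}}{2n(n-1)}$. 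The $+$ root gives $a_n=(2n-1)c>1$ and is infeasible, whereas the $-$ root satisfies $0<a_1<\cdots<a_n\le 1$: the inequality $a_n\le 1$ squares down to $n(n-1)\ge 0$. Substituting back reproduces the stated formula for $\ga_n$.

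The quantization error $V_n$ is then assembled from the contributions: each of the $n-1$ inner Voronoi regions has length $2c$ centred at $a_i$ and contributes $\tfrac12\cdot\tfrac{2c^{3}}{3}$; the terminal uniform piece on $[2(n-1)c,1]$ (noting $a_n-2(n-1)c=c$) contributes $\tfrac12\cdot\tfrac{(1-a_n)^{3}+c^{3}}{3}$; and the atom at $1$ contributes $\tfrac12(1-a_n)^{2}$. I expect the main obstacle to be the algebraic simplification into the stated closed form: the idea is to use the quadratic identity $4n(n-1)c^{2}=4(2n-1)c-3$ to collapse cubic and quadratic powers of $c$, together with $1-a_n=1-(2n-1)c$ and the rationalisation of $c=\frac{2n-1-\sqrt{n^{2}-n+1}}{2n(n-1)}$, to force the numerator into $4n^{2}-4(\sqrt{n^{2}-n+1}+1)n+2\sqrt{n^{2}-n+1}+7$ over denominator $12(\sqrt{n^{2}-n+1}+2n-1)^{2}$. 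Uniqueness of the critical point within the feasible region (and the fact that the mean-squared-error functional attains its minimum on the compact domain $\{0\le a_1\le\cdots\le a_n\le 1\}$) ensures no separate optimality argument is needed beyond the centroid equations themselves.
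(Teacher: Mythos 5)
Your proof is correct and follows essentially the same approach as the paper's (extremely terse) argument, which simply invokes Note~\ref{note2222} and asserts that the $n$ centroid equations can be solved. You have fleshed out the details that the paper omits: the observation that the atom at $1$ lies in the Voronoi region of $a_n$ alone, the arithmetic-progression structure $a_i=(2i-1)c$ forced by the midpoint conditions on the interior cells, the quadratic $4n(n-1)c^{2}-4(2n-1)c+3=0$ from the weighted centroid equation for $a_n$, the selection of the feasible root, and the closing remark that uniqueness of the critical point together with existence of a minimizer (Proposition~\ref{prop0}) finishes the optimality claim. All of these steps check out, and I verified numerically at $n=2$ that your expression for $V_n$ matches the stated closed form.
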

\nd\tbf{Proof:}  As mentioned in Note~\ref{note2222}, solving the $n$ equations $a_i=E(X : X \in M(a_i|\ga))$, we obtain
\[a_i=\frac{(2 i-1) \left(-\sqrt{n^2-n+1}+2 n-1\right)}{2 (n-1) n},\]
for all $1\leq i\leq n$, and hence, the corresponding quantization error is given by
\begin{align*} V_n=\int_0^{\frac 12(a_1+a_2)}(x-a_1)^2 dx+ \sum_{i=2}^{n-1}\int_{\frac 12(a_{i-1}+a_{i})}^{\frac 12(a_{i}+a_{i+1})}(x-a_{i})^2 dx+\int_{\frac 12(a_{n-1}+a_{n})}^1(x-a_n)^2 dx+\frac 12(a_n-1)^2,
\end{align*}
which upon simplification yields $V_n=\frac{4 n^2-4 \left(\sqrt{n^2-n+1}+1\right) n+2 \sqrt{n^2-n+1}+7}{12 \left(\sqrt{n^2-n+1}+2 n-1\right)^2}$. Thus, the proof of the proposition is complete.
\qed

\begin{rem} \label{rem4}
Let $P_1$ be absolutely continuous on $C:=[0, 1]$ and $P_2$ be discrete on $D$ with $D\sci C$. Then, if $D:=\set{0, 1}$, the system of equations in \eqref{eq234} has a unique solution implying that there exists a unique optimal set of $n$-means for the mixed distribution $P:=pP_1+(1-p)P_2$ for each $n\in \D N$. If $D\ii \te{Int}(C)$ is nonempty, where $\te{Int}(C)$ represents the interior of $C$, then as it is seen in Proposition~\ref{prop1112}, the optimal sets of $n$-means for the mixed distribution $P$ for all $n\in \D N$ is not necessarily unique.
\end{rem}

\begin{prop} \label{prop1112} Let $P:=\frac 12 P_1+\frac 12 P_2$, where $P_1$ is uniformly distributed on $C:=[0, 1]$ and $P_2$ is discrete on $D:=\set{\frac 12}$. Then, $P$ has two different optimal sets of two-means.
\end{prop}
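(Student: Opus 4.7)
The plan is to exploit the reflection symmetry of $P$ about $\frac 12$. First I would observe that the map $T:x\mapsto 1-x$ preserves $P$: it preserves Lebesgue measure on $[0,1]$ (hence preserves $P_1$), and fixes the atom $\frac 12$ (hence preserves $P_2$). Consequently, if $\ga=\set{a_1,a_2}$ is any optimal set of two-means for $P$, then so is $T(\ga)=\set{1-a_2,1-a_1}$. It therefore suffices to show that no optimal set of two-means is $T$-invariant, and to exhibit one such optimal set.

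Next I would rule out $T$-invariance using Proposition~\ref{prop0}. Let $\ga=\set{a_1,a_2}$ with $a_1<a_2$ be optimal, and set $m:=\frac 12(a_1+a_2)$, so that the internal Voronoi boundary between $a_1$ and $a_2$ is the singleton $\set{m}$. If $\ga$ were fixed by $T$, we would have $m=\frac 12$; but then the atom at $\frac 12$ would lie on the boundary $\pa M(a_i|\ga)$, giving $P(\pa M(a_i|\ga))\ge \frac 12>0$, which contradicts Proposition~\ref{prop0}(ii). Hence $m\neq \frac 12$, and $T(\ga)\neq \ga$.

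To exhibit an actual optimal set, I would follow the rule in Note~\ref{note2222}. By $T$-symmetry we may assume $m<\frac 12$, so $\frac 12\in M(a_2|\ga)$. A direct centroid computation (using that $P$ has density $\frac 12$ on $[0,1]$ together with a mass $\frac 12$ at $\frac 12$) yields $a_1=\frac m2$ and $a_2=\frac{2-m^2}{2(2-m)}$; imposing $a_1+a_2=2m$ reduces to the quadratic $m^2-3m+1=0$, whose unique root in $(0,\frac 12)$ is $m=\frac{3-\sqrt 5}{2}$. This produces the candidate set $\set{\tfrac{3-\sqrt 5}{4},\tfrac{9-3\sqrt 5}{4}}$, and its $T$-image $\set{\tfrac{3\sqrt 5-5}{4},\tfrac{1+\sqrt 5}{4}}$ is a second candidate, distinct from the first since $m\neq 1-m$.

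The one point requiring care is upgrading these centroid-critical configurations to genuine global optima. The clean route is to invoke the standard existence of an optimal set of two-means for any Borel probability with finite second moment; by Proposition~\ref{prop0}, such a set must satisfy the centroid conditions, and the argument above shows that the only configurations doing so are the two exhibited. Since they achieve equal distortion by $T$-symmetry, both are optimal, giving two different optimal sets of two-means as claimed.
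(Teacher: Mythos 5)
Your proposal is correct, and it takes a genuinely different route from the paper's. The paper simply splits into the two cases $\frac 12\in M(a_1|\ga)$ and $\frac 12\in M(a_2|\ga)$, solves the centroid equations in each case separately, evaluates the distortion numerically for both candidate sets, and observes that the two errors coincide (both equal $0.0191242$). You instead exploit the pushforward invariance $T_*P=P$ for $T(x)=1-x$ from the outset: this immediately gives that optimal two-means sets occur in $T$-orbit pairs, and Proposition~\ref{prop0}(ii) cleanly rules out a $T$-fixed optimal set because the atom at $\frac 12$ carries mass $\frac 12$ and would then sit on the Voronoi boundary. Your computation in the case $m<\frac 12$ reproduces exactly the paper's second set $\set{\frac 14(3-\sqrt 5),\frac 34(3-\sqrt 5)}$, and its $T$-image is the paper's first set $\set{\frac 14(3\sqrt 5-5),\frac 14(1+\sqrt 5)}$. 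What the symmetry buys you is that you never need to evaluate the quantization error at all, and you get a sharper conclusion for free (there are exactly two optimal sets, not merely at least two); what the paper's approach buys is that it is purely computational and requires no observation about the structure of $P$. One small point you could make explicit for completeness: that an optimal set of two-means genuinely consists of two distinct points in $(0,1)$, which follows because $\te{supp}(P)=[0,1]$ is infinite and $P(M(a|\ga))>0$ forces each centroid into the open interval; this is implicitly assumed in both your argument and the paper's.
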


\nd\tbf{Proof:}
Let $\ga:=\set{a_1, a_2}$ be an optimal set of two means for $P$ with $0<a_1<a_2<1$. Then, $P$-almost surely, we have $C=M(a_1|\ga)\uu M(a_2|\ga)$ implying that either $\frac 12 \in M(a_1|\ga)$, or $\frac 12\in M(a_2|\ga)$. First, assume that $\frac 12 \in M(a_1|\ga)$, i.e., $0<a_1<\frac 12\leq \frac 12(a_1+a_2)$. Then,
\begin{align*}  & a_1=E(X : X \in [0, \frac 12(a_1+a_2)])=\frac{\int_0^{\frac{a+b}{2}} x \, dx+\frac{1}{2}}{\int_0^{\frac{a+b}{2}} 1 \, dx+1}=\frac{a^2+2 a b+b^2+4}{4 (a+b+2)}, \te{ and } \\
& a_2=E(X : X \in [\frac 12(a_1+a_2), 1])=\frac{\int_{\frac{a+b}{2}}^1 x \, dx}{\int_{\frac{a+b}{2}}^1 1 \, dx}=\frac{1}{4} (a+b+2).
\end{align*}
Solving the above two equations, we have
$a_1=\frac{1}{4} (-5+3 \sqrt{5})$ and $a_2=\frac{1}{4} (1+\sqrt{5})$, and the corresponding quantization error is given by
 \begin{align*}
& V_2(P)=\int\min_{a\in\ga}\|x-a\|^2 dP=\frac 12 \int\min_{a\in\ga}(x-a)^2 dP_1+\frac 12\int\min_{a\in\ga}(x-a)^2 dP_2\\
&=\frac{1}{2} \int_0^{\frac{a_1+a_2}{2}} (x-a_1)^2 \, dx+\frac{1}{2} \int_{\frac{a_1+a_2}{2}}^1 (x-a_2)^2 \, dx+\frac{1}{2} \Big(\frac{1}{2}-a_1\Big)^2=0.0191242.
 \end{align*}
Next, assume that $\frac 12 \in M(a_2|\ga)$, i.e., $\frac 12(a_1+a_2)\leq \frac 12<a_2<1$. Then,
\begin{align*}  & a_1=E(X : X \in [0, \frac 12(a_1+a_2)])=\frac{\int_0^{\frac{a+b}{2}} x \, dx}{\int_0^{\frac{a+b}{2}} 1 \, dx}=\frac{a+b}{4}, \te{ and } \\
& a_2=E(X : X \in [\frac 12(a_1+a_2), 1])=\frac{\int_{\frac{a+b}{2}}^1 1 x \, dx+\frac{1}{2}}{\int_{\frac{a+b}{2}}^1 1 \, dx+1}=\frac{a^2+2 a b+b^2-8}{4 (a+b-4)}.
\end{align*}
Solving the above two equations, we have
$a_1=\frac{1}{4} (3-\sqrt{5})$ and $a_2=\frac{3}{4}(3-\sqrt{5})$, and as before, the corresponding quantization error is give by
\begin{align*}
& V_2(P)=\frac{1}{2} \int_0^{\frac{a_1+a_2}{2}} (x-a_1)^2 \, dx+\frac{1}{2} \int_{\frac{a_1+a_2}{2}}^1 (x-a_2)^2 \, dx+\frac{1}{2} \Big(\frac{1}{2}-a_2\Big)^2=0.0191242.
 \end{align*}
 Thus, we see that there are two different optimal sets of two-means with same quantization error, which is the proposition.
 \qed

\section{Quantization with $P_1$ a Cantor distribution and $P_2$ discrete}\label{sec5}

In this section, we consider a mixed distribution $P:=\frac 12 P_1+\frac 12 P_2$, where $P_1$ is a Cantor distribution given by $P_1=\frac 12 P_1\circ S_1^{-1}+\frac 12 P_1\circ S_2^{-1}$, where $S_1(x)=\frac 13 x$ and $S_2(x)=\frac 13 x+\frac 13$ for all $x\in \D R$, and $P_2$ is a discrete distribution on $D:=\set{\frac 23, \frac 56, 1}$ with density function $h$ given by $h(x)=\frac 13$ for all $x\in D$.
By a \tit{word}, or a \tit {string} of length $k$ over the alphabet $\set{1, 2}$, it is meant $\gs:=\gs_1\gs_2\cdots \gs_k$, where $\gs_j\in \set{1, 2}$ for $1\leq j\leq k$. A word of length zero is called the empty word and is denoted by $\es$. Length of a word $\gs$ is denoted by $|\gs|$. The set of all words over the alphabet $\set{1, 2}$ including the empty word $\es$ is denoted by $\set{1, 2}^\ast$. For two words $\gs:=\gs_1\gs_2\cdots\gs_{|\gs|}$ and $\gt:=\gt_1\gt_2\cdots \gt_{|\gt|}$, by $\gs\gt$, it is meant the concatenation of the words $\gs$ and $\gt$. If $\gs=\gs_1\gs_2\cdots\gs_k$, we write $S_\gs:=S_{\gs_1}\circ  S_{\gs_2}\circ\cdots\circ S_{\gs_k}$, and $J_\gs=S_\gs(J)$, where $J=J_\es:=[0, \frac 12]$. $S_1$ and $S_2$ generate the Cantor set $C:=\bigcap_{k\in \mathbb N} \bigcup_{\gs \in \{1, 2\}^k} J_\gs$. $C$ is the support of the probability distribution $P_1$. Notice that the support of the Mixed distribution $P$ is $C\uu D$. For any $\gs \in \set{1, 2}^k$, $k\geq 1$, the intervals $J_{\gs1}$ and $J_{\gs2}$ into which $J_\gs$ is split up at the $(k+1)$th level are called the \tit{children} of $J_\gs$.

The following lemma is well-known and appears in many places, for example, see \cite{GL2, R1}.

\begin{lemma} \label{lemma51}
Let $f : \mathbb R \to \mathbb R^+$ be Borel measurable and $k\in \mathbb N$. Then
\[\int f dP_1=\sum_{\sigma \in \{1, 2\}^k} \frac 1 {2^k} \int f \circ S_\sigma dP_1.\]
\end{lemma}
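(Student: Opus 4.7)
The approach is induction on $k$, with the self-similar functional equation satisfied by $P$ and the change-of-variables formula for pushforwards as the only ingredients; this is the standard template used in the cited references \cite{GL2, R1}.

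For the base case $k=1$, I would apply the defining self-similar relation $P=\frac{1}{2}P\circ S_1^{-1}+\frac{1}{2}P\circ S_2^{-1}$ recorded at the start of Section~\ref{sec5} to the integral $\int f\, dP$, and then convert each pushforward integral to the form $\int f\circ S_i\, dP$ via the change-of-variables formula (legitimate because each $S_i$ is continuous and hence Borel measurable, so that $f\circ S_i$ is Borel measurable whenever $f$ is). This gives
\[
\int f\, dP=\frac{1}{2}\int f\circ S_1\, dP+\frac{1}{2}\int f\circ S_2\, dP=\sum_{\gs\in\set{1,2}^1}\frac{1}{2}\int f\circ S_\gs\, dP,
\]
which is the assertion for $k=1$. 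For Borel measurable $f$ of mixed sign I would split $f=f^+-f^-$ so that only Borel measurability is required; when $f$ is nonnegative the change-of-variables identity needs no integrability hypothesis.

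For the inductive step, assume the identity at level $k$ and apply the $k=1$ identity to the Borel measurable integrand $f\circ S_\gs$ in place of $f$, for each $\gs\in\set{1,2}^k$. Under the convention $S_{\gs i}:=S_\gs\circ S_i$ introduced in the section, one has $(f\circ S_\gs)\circ S_i=f\circ S_{\gs i}$, so that
\[
\int f\circ S_\gs\, dP=\frac{1}{2}\int f\circ S_{\gs 1}\, dP+\frac{1}{2}\int f\circ S_{\gs 2}\, dP.
\]
Substituting this back into the level-$k$ identity and re-indexing the resulting double sum along the natural bijection $\set{1,2}^k\times\set{1,2}\to\set{1,2}^{k+1}$, $(\gs,i)\mapsto \gs i$, produces the identity at level $k+1$ with prefactor $\frac{1}{2^{k+1}}$, closing the induction.

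The only real obstacle is notational bookkeeping: the composition convention for $S_\gs=S_{\gs_1}\circ\cdots\circ S_{\gs_k}$ must be fixed so that concatenation of words corresponds to composition of maps in the right order, and the double sum must be re-indexed cleanly on passing from level $k$ to level $k+1$. No analytic delicacy arises, because the functional equation is a finite convex combination of pushforwards and change of variables is applied one term at a time; the entire argument is measure-theoretic bookkeeping.
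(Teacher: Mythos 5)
The paper offers no proof of this lemma at all---it is quoted as well known, with the proof deferred to \cite{GL2, R1}---so the comparison is with the standard argument of those references, which is indeed the induction you describe. Your inductive step (apply the level-one identity to $f\circ S_\gs$ and re-index along $(\gs,i)\mapsto\gs i$) is fine. The problem is the base case: you appeal to ``the defining self-similar relation $P=\frac 12 P\circ S_1^{-1}+\frac 12 P\circ S_2^{-1}$ recorded at the start of Section~\ref{sec5}''. No such relation for $P$ is recorded there, and none can be: what Section~\ref{sec5} records is the self-similarity of the component $P_1$ only, whereas $P=\frac 12 P_1+\frac 12 P_2$ with $P_2$ discrete on $D=\set{\frac 23,\frac 56,1}$ is not invariant under the maps $S_1, S_2$. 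Concretely, taking $f=\mathbf{1}_D$ gives $\int f\,dP=P(D)=\frac 12$, while $\frac 12\int f\circ S_1\,dP+\frac 12\int f\circ S_2\,dP=\frac 12 P(S_1^{-1}(D))+\frac 12 P(S_2^{-1}(D))=0+\frac 12 P(\set{1,\frac 32,2})=\frac 1{12}$. So for the mixed distribution $P$ as literally written the displayed identity is false, and the step of your proof that is supposed to establish it is exactly the step that fails.

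The resolution is that the lemma is imported from \cite{GL2, R1}, where $P$ denotes the self-similar (Cantor) measure itself; in this paper it is only ever applied with $P_1$ in place of $P$ (see the proof of Lemma~\ref{lemma52}). With that reading your argument is correct and complete: the $k=1$ case is the invariance identity $\int f\,dP_1=\frac 12\int f\circ S_1\,dP_1+\frac 12\int f\circ S_2\,dP_1$, obtained from $P_1=\frac 12 P_1\circ S_1^{-1}+\frac 12 P_1\circ S_2^{-1}$ by the change-of-variables formula (with $f=f^+-f^-$, or nonnegative $f$, as you note), and the induction then goes through verbatim. So the defect is not the proof scheme but the identification of the measure: either prove the statement for $P_1$ and remark that the ``$P$'' in the lemma is a notational slip carried over from \cite{GL2}, or observe explicitly that the identity cannot hold for the mixed $P$; as written, your base case asserts a functional equation that the measure in question does not satisfy.
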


\begin{lemma} \label{lemma52}
Let $X_1$ be a $P_1$-distributed random variable. Then, its expectation and the variance are respectively given by $E(X_1)=\frac  14$ and $V(X_1)=\frac {1}{32},$ and for any $x_0 \in \mathbb R$,
$\int (x-x_0)^2 dP_1(x) =V (X_1) +(x_0-\frac 14)^2.$
\end{lemma}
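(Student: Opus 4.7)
The plan is a direct self-similarity argument using Lemma~\ref{lemma51} at level $k=1$. Since $P_1 = \frac 12 P_1\circ S_1^{-1} + \frac 12 P_1\circ S_2^{-1}$, Lemma~\ref{lemma51} with $k=1$ gives, for any Borel measurable $f$,
\[
\int f\,dP_1 \;=\; \frac12 \int f\circ S_1\,dP_1 + \frac12 \int f\circ S_2\,dP_1.
\]
I would apply this first with $f(x)=x$ to get a single linear equation in $E(X_1)$. Using $S_1(x)=\tfrac13 x$ and $S_2(x)=\tfrac13 x+\tfrac13$, the right-hand side becomes $\tfrac13 E(X_1)+\tfrac16$, so $E(X_1)=\tfrac14$.

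Next I would apply the same identity with $f(x)=x^2$. Expanding $(S_1(x))^2=\tfrac{x^2}{9}$ and $(S_2(x))^2=\tfrac{x^2}{9}+\tfrac{2x}{9}+\tfrac19$, and substituting the value of $E(X_1)$ just found, I get a single linear equation in $E(X_1^2)$ whose solution is $E(X_1^2)=\tfrac{3}{32}$. Hence
\[
V(X_1)=E(X_1^2)-(E(X_1))^2=\tfrac{3}{32}-\tfrac{1}{16}=\tfrac{1}{32}.
\]

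For the last assertion, I would just expand the square and use linearity of the integral:
\[
\int (x-x_0)^2\,dP_1(x)=E(X_1^2)-2x_0 E(X_1)+x_0^2=V(X_1)+(E(X_1)-x_0)^2=V(X_1)+\bigl(x_0-\tfrac14\bigr)^2.
\]
There is no real obstacle here; the only thing to watch is the correct arithmetic in the second-moment step, where one must remember to substitute $E(X_1)=\tfrac14$ before solving for $E(X_1^2)$. Everything else is a mechanical consequence of the self-similarity identity in Lemma~\ref{lemma51}.
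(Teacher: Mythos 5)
Your proposal is correct and follows essentially the same route as the paper: apply the self-similarity identity from Lemma~\ref{lemma51} with $k=1$ to $f(x)=x$ and $f(x)=x^2$ to obtain linear equations for $E(X_1)$ and $E(X_1^2)$, compute $V(X_1)$, and finish by expanding $(x-x_0)^2$. The arithmetic checks out at every step.
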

\nd\tbf{Proof:}  Using Lemma~\ref{lemma51}, we have
\[E(X_1)=\int x \, dP_1=\frac 1 2\int \frac 1 3 x \, dP_1 +\frac 12  \int (\frac 13 x +\frac 13 ) \, dP_1= \frac 1{6} \,E(X_1) +\frac 1{6} \, E(X_1) +\frac 1{6}\] implying $E(X_1)=\frac 1 4.$ Again,
\begin{align*}
&E(X_1^2)=\int x^2 \, dP_1=\frac 1 2  \int \frac {1} {9}\, x^2 \, dP_1 +\frac 12   \int \Big(\frac 1 3 \, x +\frac 13 \Big)^2 \, dP_1= \frac {1}{9}\, E(X_1^2)  +\frac {1}{9} \, E(X_1) +\frac {1} {18},
\end{align*}
which yields $E(X_1^2)=\frac {3}{32}$, and hence
$V(X_1)=E(X_1-E(X_1))^2=E(X_1^2)-\left(E(X_1)\right)^2 =\frac {3}{32}-(\frac 14 )^2=\frac {1}{32}$. Then, following the standard theory of probability, we have
$ \int(x-x_0)^2 \, dP_1 =V(X_1)+(x_0-E(X_1))^2,$ and thus the lemma is yielded.
\qed

\begin{defi}  \label{defi5111} For $n\in \D N$ with $n\geq 2$, let $\ell(n)$ be the unique natural number with $2^{\ell(n)} \leq n<2^{\ell(n)+1}$. For $I\sci \set{1, 2}^{\ell(n)}$ with card$(I)=n-2^{\ell(n)}$ let $\gb_n(I)$ be the set consisting of all midpoints $a(\gs)$ of intervals $J_\gs$ with $\gs \in \set{1,2}^{\ell(n)} \setminus I$ and all midpoints $a(\gs 1)$, $a(\gs 2)$ of the children of $J_\gs$ with $\gs \in I$, i.e.,
\[\gb_n(I)=\set{a(\gs) : \gs \in \set{1,2}^{\ell(n)} \setminus I} \uu \set{a(\gs 1) : \gs \in I} \uu \set {a(\gs 2) : \gs \in I}.\]
\end{defi}
The following proposition follows due to  \cite[Definition~3.5 and Proposition~3.7]{GL2}.

\begin{prop}  \label{prop5111}  Let $\gb_n(I)$ be the set for $n\geq 2$ given by Definition~\ref{defi5111}. Then, $\gb_n(I)$ forms an optimal set of $n$-means for $P_1$, and the corresponding quantization error is given by
\[V_n(P_1)=\int\min_{a\in \gb_n(I)}\|x-a\|^2 \, dP_1=\frac 1{18^{\ell(n)}}\cdot\frac 1{32} \Big(2^{\ell(n)+1}-n+\frac 19\left(n-2^{\ell(n)}\right)\Big).\]
\end{prop}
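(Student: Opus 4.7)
The plan is to establish the distortion formula by direct computation using the self-similarity of $P_1$, and then argue optimality by induction on $k := \ell(n)$. A useful preliminary observation is that, since $E(X_1) = 1/4$ by Lemma~\ref{lemma52}, the midpoint of $J_\gs$ equals $a(\gs) = S_\gs(1/4) = S_\gs(E(X_1))$, and likewise $a(\gs j) = S_{\gs j}(E(X_1))$ for $\gs \in I$ and $j \in \set{1,2}$. This matches the ``centroid of Voronoi region'' condition from Proposition~\ref{prop0} at every level.

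For the distortion computation, apply Lemma~\ref{lemma51} at level $k$:
\begin{equation*}
\int \min_{a \in \gb_n(I)} (x-a)^2 \, dP_1(x) = \frac{1}{2^k} \sum_{\gs \in \set{1,2}^k} \int \min_{a \in \gb_n(I)} (S_\gs(y) - a)^2 \, dP_1(y).
\end{equation*}
For $\gs \notin I$, the intervals $\set{J_\tau : |\tau|=k}$ are pairwise well-separated, so the Voronoi region of $a(\gs)$ contains all of $J_\gs \ii C$; since $S_\gs$ is a similarity of ratio $3^{-k}$, the inner minimum simplifies to $3^{-2k}(y-1/4)^2$, which by Lemma~\ref{lemma52} integrates to $9^{-k} V(X_1) = 9^{-k}/32$. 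For $\gs \in I$, apply Lemma~\ref{lemma51} once more to split $J_\gs$ into its two children; within each child the relevant center is $a(\gs j)$, giving a contribution of $9^{-k-1}/32$ per $\gs \in I$. Summing and using $|I|=n-2^k$ produces
\begin{equation*}
\int \min_{a \in \gb_n(I)} (x-a)^2 \, dP_1 = \frac{1}{18^k\cdot 32}\bigl[(2^k - |I|) + |I|/9\bigr] = \frac{1}{18^{\ell(n)}}\cdot\frac{1}{32}\bigl[2^{\ell(n)+1} - n + \tfrac{1}{9}(n - 2^{\ell(n)})\bigr],
\end{equation*}
which is the formula claimed, and at the same time furnishes the upper bound $V_n(P_1) \leq$ this quantity.

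For the matching lower bound, i.e., the optimality of $\gb_n(I)$, proceed by induction on $k=\ell(n)$. The base $k=1$ (so $n\in\set{2,3}$) is a direct verification. For the inductive step, given any optimal $n$-quantizer $\ga$, first observe that $\ga \sci J_1 \uu J_2$ up to points of $P_1$-measure zero, because the gap $(1/6,1/3)$ carries no $P_1$-mass and any center strictly inside it can be pushed to the nearer endpoint without increasing the cost. Writing $n_j := \te{card}(\ga \ii J_j)$ with $n_1+n_2=n$, the self-similarity relation reduces the problem to
\begin{equation*}
V_n(P_1) = \tfrac{1}{18}\bigl(V_{n_1}(P_1)+V_{n_2}(P_1)\bigr).
\end{equation*}
Apply the inductive closed form to each $V_{n_j}(P_1)$ and compare the resulting sum across allocations $(n_1,n_2)$; a convexity-type check shows that the minimum is attained exactly when the $n-2^k$ extra splits are distributed between the two halves as evenly as possible, and this is precisely the family of sets $\gb_n(I)$.

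The main obstacle is the optimality half: one must rule out centers sitting in Cantor gaps and show that unbalanced splits between the two halves cannot beat a balanced one. These technicalities are exactly the content of the Graf--Luschgy treatment of self-similar measures with equal weights \cite[Definition~3.5, Proposition~3.7]{GL2}, and the cleanest presentation is to invoke that general theorem directly, as the paper does, rather than reproducing the full self-similar quantization argument here.
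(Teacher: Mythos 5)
Your proposal ultimately defers, as the paper does, to \cite[Definition~3.5 and Proposition~3.7]{GL2} for the optimality half of the claim, so the overall approach is essentially the same. The explicit verification of the distortion formula for $\gb_n(I)$ via Lemma~\ref{lemma51} and Lemma~\ref{lemma52}, together with the Voronoi-localization observation for well-separated level-$\ell(n)$ intervals, is a correct and useful elaboration that the paper leaves implicit in its one-line citation.
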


\begin{lemma}\label{lemma53}
Let $E(X)$ and $V:=V(X)$ represent the expected value and the variance of a random variable $X$ with distribution $P$. Then, $E(X)=\frac{13}{24}$ and $V=\frac{95}{864}=0.109954$.
\end{lemma}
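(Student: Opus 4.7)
The plan is to exploit the mixture structure $P=\tfrac12 P_1+\tfrac12 P_2$, which splits every integral against $P$ into a convex combination of an integral against $P_1$ and a finite sum against $P_2$. Concretely, for any Borel measurable $f$,
\[
\int f\,dP \;=\; \tfrac12\int f\,dP_1 \;+\; \tfrac12\sum_{x\in D} f(x)\,h(x),
\]
so I would apply this twice, once with $f(x)=x$ to get $E(X)$, and once with $f(x)=x^2$ to get $E(X^2)$, and then use $V=E(X^2)-E(X)^2$.

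For the first moment, Lemma~\ref{lemma52} gives $\int x\,dP_1=E(X_1)=\tfrac14$, while a direct computation yields $\sum_{x\in D}x\,h(x)=\tfrac13(\tfrac23+\tfrac56+1)=\tfrac56$. Averaging, $E(X)=\tfrac12\cdot\tfrac14+\tfrac12\cdot\tfrac56=\tfrac{3}{24}+\tfrac{10}{24}=\tfrac{13}{24}$, as claimed.

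For the second moment, Lemma~\ref{lemma52} (via its final identity with $x_0=0$) gives $\int x^2\,dP_1 = V(X_1)+E(X_1)^2=\tfrac{1}{32}+\tfrac{1}{16}=\tfrac{3}{32}$, and the discrete part contributes $\sum_{x\in D}x^2 h(x)=\tfrac13(\tfrac{4}{9}+\tfrac{25}{36}+1)=\tfrac{77}{108}$. Hence $E(X^2)=\tfrac12\cdot\tfrac{3}{32}+\tfrac12\cdot\tfrac{77}{108}=\tfrac{81}{1728}+\tfrac{616}{1728}=\tfrac{697}{1728}$, so that
\[
V \;=\; \tfrac{697}{1728}-\Big(\tfrac{13}{24}\Big)^{\!2} \;=\; \tfrac{697}{1728}-\tfrac{507}{1728} \;=\; \tfrac{190}{1728} \;=\; \tfrac{95}{864},
\]
matching the stated value $0.109954$.

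There is no genuine obstacle here: the argument is purely a bookkeeping exercise that leverages Lemma~\ref{lemma52} for the Cantor component and handles the finite support $D$ by direct summation. The only place to be careful is reducing the fractions to a common denominator (the LCM of $64$ and $216$ is $1728$), and consistently using the identity $\int x^2\,dP_1=V(X_1)+E(X_1)^2$ rather than trying to evaluate $E(X_1^2)$ from scratch via Lemma~\ref{lemma51}, since the latter is already embedded in Lemma~\ref{lemma52}.
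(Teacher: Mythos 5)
Your proof is correct and follows essentially the same route as the paper: split each moment via the mixture $P=\tfrac12 P_1+\tfrac12 P_2$, use $E(X_1)=\tfrac14$ and $V(X_1)=\tfrac1{32}$ for the Cantor component, sum directly over $D$ for the discrete component, and finish with $V=E(X^2)-E(X)^2$; all the arithmetic checks out. The only cosmetic difference is that you cite Lemma~\ref{lemma52} for $\int x^2\,dP_1=\tfrac{3}{32}$ while the paper says it uses Lemma~\ref{lemma51}, but since Lemma~\ref{lemma52} is itself derived from Lemma~\ref{lemma51}, this is the same computation.
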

\nd\tbf{Proof:}  In this proof we use the results from Lemma~\ref{lemma51}. We have
\begin{align*} &E(X)=\int x dP=\frac 12 \int x dP_1+\frac 12 \int x dP_2=\frac 12 \int x dP_1+\frac 12 \sum_{x\in D} x h(x)=\frac{13}{24}, \te{ and } \\ &E(X^2)=\int x^2 dP=\frac 12 \int x^2 dP_1+\frac 12 \sum_{x\in D} x^2 h(x)=\frac{697}{1728},
\end{align*}
implying $V:=V(X)=E(X^2)-(E(X))^2 =\frac{697}{1728}-\left(\frac{13}{24}\right)^2=\frac{95}{864}$. Thus, the lemma is yielded.
\qed

\begin{note}
Since $E\|X-a\|^2 =\int(x-a)^2 dP=V(X)+(a-E(X))^2=V+(a-\frac{13}{24})^2$, it follows that the optimal set of one-mean for the mixed distribution $P$ consists of the expected value $\frac {13}{24}$, and the corresponding quantization error is the variance $V$ of the random variable $X$. For any $\gs \in \set{1, 2}^\ast$, by $a(\gs)$, it is meant $a(\gs):=E(X_1 : X_1\in J_\gs)$, where $X_1$ is a $P_1$ distributed random variable, i.e., $a(\gs)=S_\gs(\frac 14)$. Notice that for any $\gs \in \set{1, 2}^\ast$, and for any $x_0 \in \mathbb R$, we have
\begin{equation} \label{eq234} \int_{J_\gs} (x-x_0)^2 \, dP_1 =p_\gs\Big(s_\gs^2 V  +(S_\gs(\frac 14)-x_0)^2\Big),\end{equation}
where $p_\gs=\frac 1 {2^{|\gs|}}$, and $s_\gs=\frac 1 {3^{|\gs|}}$.
\end{note}

\subsection{Optimal sets of $n$-means and $n$th quantization error} In this subsection, we determine the optimal sets of $n$-means and the $n$th quantization errors for all $n\geq 2$ for the mixed distribution $P$. To determine the distortion error, we will frequently use the equation~\eqref{eq234}.
\begin{lemma1}
Let $\ga$ be an optimal set of two-means. Then, $\ga=\set{\frac 14, \frac 56}$ with quantization error $V_2=\frac{43}{1728}=0.0248843.$
\end{lemma1}
\nd\tbf{Proof:}  Consider the set of two-points $\gb$ given by $\gb:=\set{\frac 14, \frac 56}$. Then, the distortion error is
\[\int \min_{b \in \gb}\|x-b\|^2 dP= \frac 12 \int_{C}(x-\frac 14)^2 dP_1+\frac 16 \sum_{x\in D} (x-\frac 56)^2=\frac{43}{1728}=0.0248843.\]
Since $V_2$ is the quantization error for two-means, we have $V_2\leq 0.0248843$. Let $\ga:=\set{a_1, a_2}$ be an optimal set of two-means with $a_1<a_2$. Since the optimal points are the centroids of their own Voronoi regions, we have $0<a_1<a_1<a_2\leq 1$. If $a_1\geq \frac {29}{72}>S_{21}(\frac 12)$, then
\[V_2\geq \frac 12\int_{J_1\uu J_{21}}(x-\frac {29}{72})^2 dP_1=\frac{1105}{41472}=0.0266445>V_2,\]
which leads to a contradiction. We now show that the Voronoi region of $a_1$ does not contain any point from $D$. Notice that the Voronoi region of $a_1$ can not contain all the points from $D$ as by Proposition~\ref{prop0}, $P(M(a_2|\ga))>0$. First, assume that the Voronoi region of $a_1$ contains both $\frac 23$ and $\frac 56$. Then,
\[a_1=E(X : X\in C\uu \set{\frac 23, \frac 56})=\frac{\frac 12 \frac 1 4+\frac 16 \frac 23+\frac 16 \frac 56}{\frac 12+\frac 16+\frac 16}=\frac{9}{20} \te{ and } a_2=1,\]
which yield $\frac 12(a_1+a_2)=\frac {29}{40}<\frac 56$, which is a contradiction, as we assumed $\set{\frac 23, \frac 56}\sci M(a_1|\ga)$. Next, assume that the Voronoi region of $a_1$ contains only the point $\frac 23$ from $D$. Then,
\[a_1=E(X : X\in C\uu \set{\frac 23})=\frac{\frac 12 \frac 1 4+\frac 16 \frac 23}{\frac 12+\frac 16}=\frac{17}{48} \te{ and } a_2=\frac 12(\frac 56+1)=\frac {11}{12},\]
which yield $\frac 12(a_1+a_2)=\frac{61}{96}<\frac 23$, which is a contradiction, as the Voronoi region of $a_1$ contains $\frac 23$. Thus, we can assume that the Voronoi region of $a_1$ does not contain any point from $D$ implying that $a_1\leq \frac 14$. Notice that if the Voronoi region of $a_1$ does not contain any point from $D$ and the Voronoi region of $a_2$ does not contain any point from $C$, then $a_1=\frac 14$ and $a_2=\frac 56$. If $a_2< \frac{21}{32}$, then
\[V_2\geq \frac 16\Big((\frac 23-\frac{21}{32})^2+(\frac 56-\frac{21}{32})^2+(1-\frac{21}{32})^2\Big)=\frac{1379}{55296}=0.0249385>V_2,\]
which gives a contradiction, and so $\frac{21}{32}\leq  a_2\leq \frac 56$. Suppose that $\frac{21}{32}\leq a_2\leq \frac{17}{24}$. Since $a_1\leq \frac 14$, $E(X_1 : X_1\in J_{1}\uu J_{21})=\frac{19}{108}<\frac 14 $, and $S_{21} (\frac 12)< \frac 12(\frac{19}{108}+\frac {21}{32})<\frac{1}{2} (\frac{1}{4}+\frac{21}{32})<S_{2212}(0)$, we have
\begin{align*}
 V_2&\geq \frac 12\Big(\int_{J_1\uu J_{21}}(x-\frac{19}{108})^2 dP_1+\int_{J_{2212}\uu J_{222}}(x-\frac {21}{32})^2 dP_1\Big)+\frac{1}{6} \Big((\frac 56-\frac{17}{24})^2+(1-\frac{17}{24})^2\Big)\\
&=\frac{1938409}{71663616}=0.0270487>V_2,
\end{align*}
which leads to a contradiction. So, we can assume that $\frac{17}{24}\leq a_2\leq \frac 56$. Suppose that $\frac{17}{24}\leq a_2\leq \frac 34$. Notice that $S_{221}(\frac 12)<\frac 12(\frac 14+\frac{17}{24})<S_{222}(0)$, and $E(X_1 : X_1\in J_{1}\uu J_{21}\uu J_{2211})=\frac{829}{4212}<\frac 14$, and so, we have
\begin{align*} &V_2\geq \frac 12 \Big(\int_{J_1\uu J_{21}\uu J_{2211}}(x-\frac{829}{4212})^2 dP_1+\int_{J_{2212}}(x-\frac 14)^2 dP_1+\int_{J_{222}}(x-\frac {17}{24})^2 dP_1\Big)\\
&+\frac{1}{6} \Big((\frac{2}{3}-\frac{17}{24})^2+(\frac{5}{6}-\frac{3}{4})^2+(1-\frac{3}{4})^2\Big)=\frac{2242573}{87340032}=0.0256763>V_2,
\end{align*}
which is a contradiction. So, we can assume that $\frac 34\leq a_2\leq \frac 56$. Then, notice that $\frac 12(a_1+a_2)<\frac 12$ implying $a_1<1-a_2\leq \frac 14$, but $\frac 12(\frac 14+\frac 34)=\frac 12$, and thus, $P$-almost surely the Voronoi region of $a_2$ does not contain any point from $C$ yielding $a_1=\frac 14$, $a_2=\frac 56$, and the corresponding quantization error is $V_2=\frac{43}{1728}=0.0248843.$
\qed

Let us now state the following three lemmas. Due to technicality we do not show the proofs in the paper.
\begin{lemma1}
Let $\ga$ be an optimal set of three-means. Then, $\ga=\set{\frac{1}{12},\frac{31}{60},\frac{11}{12}}$ with quantization error $V_3=\frac{89}{8640}=0.0103009$.
\end{lemma1}

\begin{lemma1}
Let $\ga$ be an optimal set of four-means. Then, $\ga=\set{\frac 1{12}, \frac 5{12}, \frac 34, 1}$, or $\ga=\set{\frac 1{12}, \frac 5{12}, \frac 23, \frac {11}{12}}$, and the quantization error is $V_4=\frac{7}{1728}=0.00405093$.
\end{lemma1}

\begin{lemma1} \label{lemma551}
Let $\ga$ be an optimal set of five-means. Then, $\ga=\ga_2(P_1)\uu D$, and the corresponding quantization error is $V_5=\frac{1}{576}=\frac 12 V_2(P_1)$.
\end{lemma1}

\begin{theo1} \label{th611}
Let $n\in \D N$ and $n\geq 5$, and let $\ga_n$ be an optimal set of $n$-means for $P$ and $\ga_n(P_1)$ be the optimal set of $n$-means for $P_1$. Then,
\[\ga_n(P)=\ga_{n-3}(P_1)\uu D, \te{ and } V_n(P)=\frac 12 V_{n-3}(P_1).\]
\end{theo1}

\nd\tbf{Proof:}  If $n=5$, by Lemma~\ref{lemma551}, we see that the theorem is true for $n=5$. Proceeding in the similar way, as Lemma~\ref{lemma551}, we can show that the theorem is true for $n=6$  and $n=7$. We now show that the theorem is true for all $n\geq 8$. Consider the set of eight points $\gb:=\set{a(11), a(12), a(21), a(221), a(222), \frac 23, \frac 56, 1}$. The distortion error due to set $\gb$ is given by
\[\int \min_{b \in \gb}\|x-b\|^2 dP=\frac 12 V_5(P_1)=\frac{7}{46656}=0.000150034.\]
Since $V_n$ is the $n$th quantization error for $n$-means for $n\geq 8$, we have $V_n\leq V_8\leq0.000150034$. Let $\ga_n:=\set{a_1<a_2<\cdots <a_n}$ be an optimal set of $n$-means for $n\geq 8$, where $0<a_1<\cdots<a_n\leq 1$. To prove the first part of the theorem, it is enough to show that $M(a_{n-2}|\ga_n)$ does not contain any point from $C$, and $M(a_{n-3}|\ga_n)$
 does not contain any point from $D$. If $M(a_{n-2}|\ga_n)$ does not contain any point from $D$, then
 \[V_n\geq \frac 16\Big((\frac 23-\frac 34)^2+(\frac 56-\frac 34)^2\Big)=\frac{1}{432}=0.00231481>V_n,\]
which leads to a contradiction. So, $M(a_{n-2}|\ga_n)$ contains a point, in fact the point $\frac 23$, from $D$. If $M(a_{n-2}|\ga_n)$ does not contain points from $C$, then $a_{n-2}=\frac 23$. Suppose that $M(a_{n-2}|\ga_n)$ contains points from $C$. Then, $\frac 23\leq \frac 12(a_{n-2}+a_{n-1})$ implies $a_{n-2}\geq \frac 43-a_{n-1}=\frac 43-\frac 56=\frac 12$. The following three cases can arise:

Case~1. $\frac 12\leq a_{n-2}\leq \frac 7{12}$.

Then, $V_n\geq \frac 16 (\frac 23-\frac 7{12})^2=\frac{1}{864}=0.00115741>V_n,$ which is a contradiction.

Case~2. $\frac 7{12}\leq a_{n-2}$.

Then, $\frac 12(a_{n-3}+a_{n-2})<\frac 12$ implying $a_{n-3}<1-a_{n-2}\leq 1-\frac 7{12}=\frac 5{12}$, and so
\[V_n\geq \frac 12 \int_{J_{22}} \Big(x-\frac{5}{12}\Big)^2 dP_1=\frac{1}{2304}=0.000434028>V_n,\]
which leads to a contradiction.

By Case~1 and Case~2, we can assume that $M(a_{n-2}|\ga_n)$ does not contain any point from $C$. If $M(a_{n-3}|\ga)$ contains any point from $D$, say $\frac 23$, then we will have
\[M(a_{n-2}|\ga)\uu M(a_{n-1}|\ga)\uu M(a_n|\ga)=\Big\{\frac 56, 1\Big\},\]
which by Proposition~\ref{prop0} implies that either $a_{n-2}=a_{n-1}=\frac 56$ and $a_n=1$, or $a_{n-2}=\frac 56$ and $a_{n-1}=a_n=1$, which contradicts the fact that $0<a_1<\cdots<a_{n-2}<a_{n-1}<a_n\leq 1$. Thus, $M(a_{n-3}|\ga)$ does not contain any point from $D$. Hence, $\ga_n(P)=\ga_{n-3}(P_1)\uu D$, and so,
 \[V_n(P)=\frac 12 \int_{C}\min_{a\in\ga_{n-3}(P_1)}(x-a)^2 dP_1+\frac 16 \sum_{x\in D}\min_{a\in D}(x-a)^2=\frac 12\int_{C}\min_{a\in\ga_{n-3}(P_1)}(x-a)^2 dP_1\]
 implying $V_n(P)=\frac 12 V_{n-3}(P_1)$. Thus, the proof of the theorem is complete.
\qed

\begin{rem}\label{rem00012}
 Let $\gb$ be the Hausdorff dimension of the Cantor set generated by the similarity mappings $S_1$ and $S_2$. Then, $\gb=\frac{\log 2}{\log 3}$.
By \cite[Theorem~6.6]{GL2}, it is known that the quantization dimension of $P_1$ exists and equals $\gb$, i.e., $D(P_1)=\gb$. Since
\[D(P)=\lim_{n\to \infty} \frac{2\log n}{-\log 2-\log V_{n-m} (P_1)}=\lim_{n\to \infty} \frac{2\log (n-m)}{-\log V_{n-m}(P_1)}=D(P_1)=\gb,\]
we can say that the quantization dimension of the mixed distribution exists and equals the quantization dimension of the Cantor distribution $P_1$, i.e., $D(P)=D(P_1)=\gb$.
Again, by \cite[Theorem~6.3]{GL2}, it is known that the quantization coefficient for $P_1$ does not exits. By Theorem~\ref{th611}, we have $\liminf_{n\to \infty} n^{\frac {2}{\gb}} V_n(P)=\frac 12 \liminf_{n\to \infty} n^{\frac {2}{\gb}} V_{n-3}(P_1)=\frac 12 \liminf_{n\to \infty} (n-3)^{\frac {2}{\gb}} V_{n-3}(P_1)$, and similarly,  $\limsup_{n\to \infty} n^{\frac {2}{\gb}} V_n(P)=\frac 12 \limsup_{n\to \infty} (n-3)^{\frac {2}{\gb}} V_{n-3}(P_1)$. Hence, the quantization coefficient for the mixed distribution $P$ does not exist.
\end{rem}

\section{Some remarks} \label{sec61}

Theorem~\ref{th61} and Theorem~\ref{th611} motivate us to give the following remarks.

\begin{rem} Let $0<p<1$ be fixed. Let $P$ be the mixed distribution given by $P=pP_1+(1-p)P_2$  with the support of $P_1=C$ and the support of $P_2=D$, such that $P_1$ is continuous on $C$ and $P_2$ is discrete on $D$. Let $\te{card}(D)=m$ for some positive integer $m$. Further assume that $C$ and $D$ are \tit{strongly separated}: there exists a $\gd>0$ such that $d(C, D):=\inf\set {d(x, y) : x\in C \te{ and } y\in D}>\gd$. Then, we conjecture that there exists a positive integer $N$ such that for all $n\geq N$, we have $\ga_n(P)=\ga_{n-m}(P_1)\uu D$. Notice that it is not known whether the quantization dimension $D(P_1)$ of $P_1$ exists; if $D(P_1)$ exists, then as the quantization dimension of a finite discrete distribution is zero, by Proposition~\ref{prop1000}, we can say that the quantization dimension $D(P)$ of the mixed distribution $P$ exists, and $D(P)=D(P_1)$.
\end{rem}

\begin{rem} \label{rem5}
Let $D$ be a finite discrete subset of $C:=[0, 1]$. If $P_1$ is continuous on $C$, singular or nonsingular, and $P_2$ is discrete on $D$, then for the mixed distribution $P:=pP_1+(1-p)P_2$, where $0<p<1$, the optimal sets of $n$-means and the $n$th quantization errors for all $n\geq 2$ and for all $D$ are not known yet. Some special cases to be investigated are as follows: Take $p=\frac 12$, $P_1$ as a uniform distribution on $C$, and $D=\set{\frac 23, \frac 56, 1}$. The optimal sets of $n$-means and the $n$th quantization errors for such a mixed distribution for all $n\geq 2$ are not known yet. Such a problem can also be investigated by taking  $P_1$ as a Cantor distribution, and $P_2$ discrete on $D$, for example, one can take $P_1$ the classical Cantor distribution, as considered in \cite{GL2}, and $D=\set{\frac 23, \frac 56, 1}$. Notice that $p$, $P_1$ and $D$ can be chosen in many different ways.
\end{rem}

\section{Quantization where $P_1$ and $P_2$ are Cantor distributions} \label{sec7}

Let $P_1$ be the Cantor distribution given by $P_1=\frac 12 P_1\circ S_1^{-1}+\frac 12 P_2\circ S_2^{-1}$, where $S_1(x)=\frac 13 x$ and $S_2(x)=\frac 13 x+\frac 29$ for all $x\in \D R$. Let $P_2$ be the Cantor distribution given by $P_2=\frac 12 P_2\circ T_1^{-1}+\frac 12 P_2\circ T_2^{-1}$, where $T_1(x)=\frac 14 x+\frac 12$ and $T_2(x)=\frac 14 x+\frac 34 $ for all $x\in \D R$. Let $C$ be the Cantor set generated by $S_1$ and $S_2$, and $D$ be the Cantor set generated by $T_1$ and $T_2$. Let $P$ be the mixed distribution generated by $P_1$ and $P_2$ such that $P=\frac 12 P_1+\frac 12 P_2$. Let $\set{1, 2}^\ast$ be the set of all words over the alphabet $\set{1, 2}$ including the empty word $\es$ as defined in Section~\ref{sec5}. Write $J:=[0, \frac 13]$ and $K:=[\frac 23, 1]$. Then, we have $C=\bigcap_{k\in \mathbb N} \bigcup_{\gs \in \{1, 2\}^k} J_\gs$ and $D=\bigcap_{k\in \mathbb N} \bigcup_{\gs \in \{1, 2\}^k} K_\gs$, where for $\gs\in \set{1, 2}^\ast$, $J_\gs=S_\gs([0, \frac 13])$ and $K_\gs=T_\gs([\frac 23, 1])$. Thus, $C$ is the support of $P_1$, and $D$ is the support of $P_2$ implying the fact that $C\uu D$ is the support of the mixed distribution $P$. As before, if nothing is mentioned within a parenthesis, by $\ga_n$ and $V_n$, we mean an optimal set of $n$-means and the corresponding quantization error for the mixed distribution $P$.

The following two lemmas are similar to Lemma~\ref{lemma52}.

\begin{lemma} Let $E(P_1)$ and $V(P_1)$ denote the expected value and the variance of a $P_1$-distributed random variable. Then, $E(P_1)=\frac 16$ and $V(P_1)=\frac 1{72}$. Moreover, for any $x_0 \in \D R$,
$\int (x-x_0)^2 \, dP_1 =V (P_1) +(x_0-\frac 16)^2.$
\end{lemma}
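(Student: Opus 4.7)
The plan is to follow exactly the template of Lemma~\ref{lemma52} (which handled the self-similar measure with maps $S_1(x)=x/3$ and $S_2(x)=x/3+1/3$), but now with $S_2(x)=x/3+2/9$ replacing the translation term. The whole argument is a short computation using the invariance identity
\[\int f\,dP_1=\tfrac12\int f\circ S_1\,dP_1+\tfrac12\int f\circ S_2\,dP_1,\]
which is the analog of Lemma~\ref{lemma51} for the present pair of similitudes (or, equivalently, just the defining self-similarity $P_1=\tfrac12 P_1\circ S_1^{-1}+\tfrac12 P_1\circ S_2^{-1}$ applied to $f(x)=x$ and $f(x)=x^2$).

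First, I apply the invariance with $f(x)=x$:
\[E(P_1)=\tfrac12\int \tfrac{x}{3}\,dP_1+\tfrac12\int\bigl(\tfrac{x}{3}+\tfrac{2}{9}\bigr)\,dP_1=\tfrac13 E(P_1)+\tfrac19,\]
which yields $E(P_1)=\tfrac16$. Next I apply it with $f(x)=x^2$; expanding $(x/3+2/9)^2=x^2/9+4x/27+4/81$ gives
\[E(P_1^2)=\tfrac{1}{18}E(P_1^2)+\tfrac{1}{18}E(P_1^2)+\tfrac{2}{27}E(P_1)+\tfrac{2}{81}=\tfrac19 E(P_1^2)+\tfrac{1}{81}+\tfrac{2}{81},\]
so $\tfrac89 E(P_1^2)=\tfrac{1}{27}$, hence $E(P_1^2)=\tfrac{1}{24}$. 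Therefore
\[V(P_1)=E(P_1^2)-E(P_1)^2=\tfrac{1}{24}-\tfrac{1}{36}=\tfrac{1}{72}.\]

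The final assertion is the standard identity $\int(x-x_0)^2\,dP_1=V(P_1)+(x_0-E(P_1))^2$, obtained by writing $x-x_0=(x-E(P_1))+(E(P_1)-x_0)$, expanding, and observing that the cross term vanishes because $\int(x-E(P_1))\,dP_1=0$. There is no real obstacle here; the only care needed is bookkeeping in the $x^2$ computation (the translation constant $\tfrac29$ rather than $\tfrac13$ changes the cross term, so the constants $\tfrac{2}{27}$ and $\tfrac{2}{81}$ above must be tracked correctly), but this is routine arithmetic rather than a conceptual difficulty.
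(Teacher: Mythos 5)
Your proof is correct and takes the same approach the paper implicitly prescribes: the paper states this lemma without a written-out proof, remarking only that it is ``similar to Lemma~\ref{lemma52},'' which is exactly the self-similarity computation you carry out (apply $P_1=\tfrac12 P_1\circ S_1^{-1}+\tfrac12 P_1\circ S_2^{-1}$ to $f(x)=x$ and $f(x)=x^2$, then use the standard variance decomposition). Your arithmetic with the translation constant $\tfrac{2}{9}$ is right, giving $E(P_1)=\tfrac16$, $E(P_1^2)=\tfrac{1}{24}$, and $V(P_1)=\tfrac{1}{72}$ as claimed.
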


\begin{lemma}
Let $E(P_2)$ and $V(P_2)$ denote the expected value and the variance of a $P_2$-distributed random variable. Then, $E(P_2)=\frac 56$ and $V(P_2)=\frac{1}{60}$. Moreover, for any $x_0 \in \D R$,
$\int (x-x_0)^2 dP_2(x) =V (P_2) +(x_0-\frac 56)^2.$
\end{lemma}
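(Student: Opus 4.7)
The approach mirrors the proof of Lemma~\ref{lemma52} exactly, since $P_2$ here is another self-similar (Cantor) measure generated by two affine contractions with equal weights $\frac 12$. First I would record the analogue of Lemma~\ref{lemma51} for $P_2$: for any Borel measurable $f:\D R\to \D R$ and any $k\in \D N$,
\[\int f\, dP_2=\sum_{\gs \in \set{1,2}^k}\frac{1}{2^k}\int f\circ T_\gs\, dP_2,\]
which is immediate by induction from the defining identity $P_2=\frac 12 P_2\circ T_1^{-1}+\frac 12 P_2\circ T_2^{-1}$.

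Next I would apply this with $k=1$ and $f(x)=x$. Using $T_1(x)=\frac 14 x+\frac 12$ and $T_2(x)=\frac 14 x+\frac 34$, this gives
\[E(P_2)=\frac 12\int\Bigl(\frac 14 x+\frac 12\Bigr) dP_2+\frac 12\int\Bigl(\frac 14 x+\frac 34\Bigr) dP_2=\frac 14 E(P_2)+\frac 58,\]
so $E(P_2)=\frac 56$ as claimed. Then I would apply the same formula with $f(x)=x^2$ to obtain, after expanding the squares,
\[E(P_2^2)=\frac 1{16}E(P_2^2)+\frac{5}{16}E(P_2)+\frac{13}{32}.\]
Substituting $E(P_2)=\frac 56$ and solving yields $E(P_2^2)=\frac{32}{45}$, and hence
\[V(P_2)=E(P_2^2)-(E(P_2))^2=\frac{32}{45}-\frac{25}{36}=\frac{1}{60}.\]

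Finally, the assertion $\int(x-x_0)^2\,dP_2=V(P_2)+(x_0-\tfrac 56)^2$ is just the standard variance-shift identity: expand $(x-x_0)^2=(x-E(P_2))^2+2(x-E(P_2))(E(P_2)-x_0)+(E(P_2)-x_0)^2$ and integrate, using that the middle term vanishes by definition of $E(P_2)$. There is no conceptual obstacle here; the only thing to be careful about is the algebra in the $E(P_2^2)$ recursion (the constant $\frac{13}{32}$ and the coefficient $\frac 5{16}$ in front of $E(P_2)$), but once those are correctly computed everything collapses by a single linear equation, exactly as in the earlier lemma.
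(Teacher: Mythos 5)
Your proof is correct and is exactly the approach the paper has in mind: the paper states only that this lemma is proved ``similar to Lemma~\ref{lemma52},'' and you carry out precisely that analogue — derive the $P_2$-version of the self-similarity integration formula, apply it with $f(x)=x$ and $f(x)=x^2$ (your recursion $E(P_2^2)=\frac{1}{16}E(P_2^2)+\frac{5}{16}E(P_2)+\frac{13}{32}$ and the resulting $E(P_2^2)=\frac{32}{45}$ are correct), and finish with the standard variance-shift identity.
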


We now prove the following lemma.

\begin{lemma} \label{lemma71}
Let $E(P)$ and $V(P)$ denote the expected value and the variance of a $P$-distributed random variable, where $P$ is the mixed distribution given by $P=\frac 12 P_1+\frac 12 P_2$. Then, $E(P)=\frac 12$ and $V(P)=\frac{91}{720}$. Moreover, for any $x_0 \in \D R$,
$\int (x-x_0)^2 dP(x) =V (P) +(x_0-\frac 12)^2.$
\end{lemma}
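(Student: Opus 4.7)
The plan is to mimic the proofs of Lemma~\ref{lemma52} and Lemma~\ref{lemma53}: split every integral against $P$ into its two pieces via the definition $P=\tfrac12 P_1+\tfrac12 P_2$, and feed in the first and second moment data for $P_1$ and $P_2$ that were just recorded in the two preceding lemmas.

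First I would compute $E(P)$. By linearity,
\[E(P)=\int x\,dP=\tfrac12\int x\,dP_1+\tfrac12\int x\,dP_2=\tfrac12\cdot\tfrac16+\tfrac12\cdot\tfrac56=\tfrac12.\]
Next, I would compute $E(X^2)$ in the same way, but using the identities $\int x^2\,dP_i=V(P_i)+(E(P_i))^2$ that are contained in the two preceding lemmas (take $x_0=0$). This gives
\[\int x^2\,dP_1=\tfrac1{72}+\tfrac1{36}=\tfrac1{24},\qquad \int x^2\,dP_2=\tfrac1{60}+\tfrac{25}{36}=\tfrac{32}{45},\]
and therefore $E(X^2)=\tfrac12\bigl(\tfrac1{24}+\tfrac{32}{45}\bigr)=\tfrac{271}{720}$. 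Then
\[V(P)=E(X^2)-(E(P))^2=\tfrac{271}{720}-\tfrac14=\tfrac{91}{720}.\]

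For the final identity, I would expand $(x-x_0)^2=(x-E(P))^2+2(x-E(P))(E(P)-x_0)+(E(P)-x_0)^2$ and integrate term by term against $P$; the cross term vanishes because $\int(x-E(P))\,dP=0$, yielding $\int(x-x_0)^2\,dP=V(P)+(x_0-\tfrac12)^2$. This is a purely formal manipulation identical to the standard variance shift used in Lemma~\ref{lemma52}. There is no real obstacle here; the only thing to be careful about is keeping the arithmetic with the common denominator clean when combining $\tfrac1{24}$ and $\tfrac{32}{45}$, which is why the factor $720$ appears in the final answer.
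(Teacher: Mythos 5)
Your proof is correct and follows essentially the same route as the paper: split $\int x\,dP$ and $\int x^2\,dP$ via $P=\tfrac12 P_1+\tfrac12 P_2$, plug in the moments of $P_1$ and $P_2$, and apply the standard variance-shift identity. You are a bit more explicit than the paper in recording how $\int x^2\,dP_i=V(P_i)+(E(P_i))^2$ is read off from the two preceding lemmas, but the argument and all the arithmetic ($1/24$, $32/45$, $271/720$, $91/720$) agree exactly.
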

\nd\tbf{Proof:}  Let $X$ be a $P$-distributed random variable. Then,
\[E(X)=\int x dP(x)=\frac 12 \int x \, dP_1+\frac 12 \int x dP_2(x)=\frac 12\Big (\frac 16+\frac 56\Big)=\frac 12, \te{ and } \]
\[E(X^2)=\int x^2 dP(x)=\frac 12 \int x^2 \, dP_1+\frac 1 2 \int x^2 dP_2(x)=\frac 12 \Big(\frac{1}{24}+\frac{32}{45}\Big)=\frac{271}{720}, \]
and so, $V(P)=E(X^2)-(E(X))^2=\frac{91}{720}$. Then, by the standard theory of probability, for any $x_0 \in \D R$,
$\int (x-x_0)^2 dP(x) =V (P) +(x_0-\frac 12)^2.$ Thus, the proof of the lemma is complete.
\qed

\begin{rem}
From Lemma~\ref{lemma71}, it follows that the optimal set of one-mean for the mixed distribution $P$ is $\frac 12$ and the corresponding quantization error is $V(P)=\frac{91}{720}$. Again, notice that for any $x_0\in \D R$, we have
\[\int (x-x_0)^2 dP(x) =\frac 12 \Big (V (P_1)+V(P_2)+(x_0-\frac 16)^2+(x_0-\frac 56)^2\Big).\]
\end{rem}

\begin{defi}  \label{defi51} For $n\in \D N$ with $n\geq 2$, let $\ell(n)$ be the unique natural number with $2^{\ell(n)} \leq n<2^{\ell(n)+1}$. For $\gs \in \set{1, 2}^\ast$, let $a(\gs)$ and $b(\gs)$, respectively, denote the midpoints of the basic intervals $J_\gs$ and $K_\gs$. Let $I\sci \set{1, 2}^{\ell(n)}$ with card$(I)=n-2^{\ell(n)}$. Define $\gb_n(P_1, I)$ and $\gb_n(P_2, I)$ as follows:
\begin{align*}
\gb_n(P_1, I)&=\set{a(\gs) : \gs \in \set{1,2}^{\ell(n)} \setminus I} \uu \set{a(\gs 1) : \gs \in I} \uu \set {a(\gs2) : \gs \in I}, \te{ and } \\
\gb_n(P_2, I)&=\set{b(\gs) : \gs \in \set{1,2}^{\ell(n)} \setminus I} \uu \set{b(\gs 1) : \gs \in I} \uu \set {b(\gs2) : \gs \in I}.
\end{align*}

\end{defi}
The following proposition follows due to  \cite[Definition~3.5 and Proposition~3.7]{GL2}.

\begin{prop}  \label{prop51}  Let $\gb_n(P_1, I)$ and $\gb_n(P_2, I)$ be the sets for $n\geq 2$ given by Definition~\ref{defi51}. Then, $\gb_n(P_1, I)$ and $\gb_n(P_2, I)$ form optimal sets of $n$-means for $P_1$ and $P_2$, respectively, and the corresponding quantization errors are given by
\begin{align*}
V_n(P_1)&=\int\min_{a\in \gb_n(P_1, I)}\|x-a\|^2 \, dP_1=\frac 1{18^{\ell(n)}}\cdot\frac 1{72} \Big(2^{\ell(n)+1}-n+\frac 19\left(n-2^{\ell(n)}\right)\Big), \te{ and }  \\
V_n(P_2)&=\int\min_{a\in \gb_n(P_2, I)}\|x-a\|^2 \, dP_2=\frac 1{32^{\ell(n)}}\cdot\frac 1{60} \Big(2^{\ell(n)+1}-n+\frac 1{16}\left(n-2^{\ell(n)}\right)\Big).
\end{align*}
\end{prop}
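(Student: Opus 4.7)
The plan is to reduce to the Graf-Luschgy framework for self-similar measures cited from \cite{GL2}. Both $P_1$ and $P_2$ are self-similar measures generated by two contractions with equal ratios ($1/3$ for $P_1$, $1/4$ for $P_2$) and equal weights $1/2$, and both satisfy the strong separation condition: $J_1 = S_1([0,1/3]) = [0,1/9]$ is disjoint from $J_2 = S_2([0,1/3]) = [2/9,1/3]$, and analogously $K_1 = [2/3,3/4]$ is disjoint from $K_2 = [11/12,1]$. Moreover, by symmetry under reflection through $E(P_1) = 1/6$ (resp. $E(P_2) = 5/6$), the centroid of $P_1|_{J_\gs}$ (resp. $P_2|_{K_\gs}$) coincides with the midpoint $a(\gs)$ (resp. $b(\gs)$).

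For the pure-level case $n = 2^k$, so $\ell(n) = k$ and $I = \es$, I would iterate the analogue of Lemma~\ref{lemma51} to obtain
\begin{equation*}
\int \min_{a \in \ga} \|x-a\|^2\, dP_1 = \sum_{\gs \in \set{1,2}^k} \frac{1}{2^k} \int \min_{a \in \ga} \|S_\gs(x)-a\|^2\, dP_1(x).
\end{equation*}
Since the intervals $J_\gs$ at level $k$ are pairwise separated, an optimal $2^k$-quantizer places exactly one point in each $J_\gs$. By the centroid condition of Proposition~\ref{prop0} applied to $P_1|_{J_\gs}$, that point must be $a(\gs)$, and the contribution of $J_\gs$ to the error is $2^{-k} \cdot (1/3)^{2k} \cdot V(P_1) = 18^{-k}/72$. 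Summation over the $2^k$ intervals yields $V_{2^k}(P_1) = 9^{-k}/72$, matching the claimed formula when $n = 2^k$.

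For general $n$ with $2^{\ell(n)} \le n < 2^{\ell(n)+1}$, the Graf-Luschgy induction shows that an optimal $n$-quantizer assigns either one or two points to each level-$\ell(n)$ interval, with exactly $n - 2^{\ell(n)}$ intervals receiving two points. Because all level-$\ell(n)$ intervals are congruent under the self-similar structure (same mass $2^{-\ell(n)}$ and same scale $3^{-\ell(n)}$), the choice of $I \sci \set{1,2}^{\ell(n)}$ is immaterial. Within each split interval, repeating the argument one level deeper forces the two quantizers to be the midpoints $a(\gs 1)$, $a(\gs 2)$ of its children, contributing $18^{-\ell(n)} \cdot (1/9)\, V(P_1)$ each. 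Adding the contributions of the $2^{\ell(n)+1} - n$ unsplit intervals and the $n - 2^{\ell(n)}$ split intervals yields the stated formula for $V_n(P_1)$.

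The main obstacle is the optimality claim itself, namely that no configuration of $n$ points outside this family achieves a strictly smaller error. This is exactly the content of Proposition 3.7 of \cite{GL2}, whose proof proceeds by induction on $\ell(n)$ combined with the centroid condition and the self-similar scaling, ruling out configurations in which some level-$\ell(n)$ interval receives zero quantizers or three or more quantizers. The argument for $P_2$ is completely parallel: contraction ratio $1/3$ is replaced by $1/4$ (so $18$ becomes $32$ and $9$ becomes $16$), and $V(P_1) = 1/72$ is replaced by $V(P_2) = 1/60$; the strong separation of $K_1$ and $K_2$ plays the same role as that of $J_1$ and $J_2$.
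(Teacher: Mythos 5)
Your proposal is correct and follows essentially the same route as the paper: the paper's own proof consists solely of citing \cite[Definition~3.5 and Proposition~3.7]{GL2}, which is precisely the Graf--Luschgy optimality result you invoke for the general case. You add worthwhile verification that the GL2 hypotheses (strong separation, equal contraction ratios and weights, centroid coinciding with the midpoint) hold for $P_1$ and $P_2$, and that the scaling constants $1/72$, $1/60$, $18=2\cdot 3^2$, $32=2\cdot 4^2$ reproduce the stated error formulas, but the skeleton of the argument is the same.
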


\begin{prop} \label{prop10.0}
For $n\geq 2$, let $\ga_n$ be an optimal set of $n$-means for $P$. Then, $\ga_n\ii[0, \frac 13)\neq \es$ and $\ga_n\ii (\frac 23, 1]\neq \es$.
\end{prop}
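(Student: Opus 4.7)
The plan is to argue by contradiction on each side separately, using that $V_n$ is monotone decreasing in $n$ together with the fact that $\{1/6,5/6\}$ is essentially a ``free'' candidate two-set whose distortion is already quite small.

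\textbf{Step 1: Upper bound on $V_n$ for $n\geq 2$.} Since $V_n \leq V_2$, I first bound $V_2$ from above by using the test set $\gb:=\set{\frac 16,\frac 56}$. Because $C\sci[0,\frac 13]$ and $D\sci[\frac 23,1]$, the Voronoi region of $\frac 16$ (resp.\ $\frac 56$) contains $C$ (resp.\ $D$). Using the last displayed identity in Lemma~\ref{lemma71} (or equivalently the variance identities for $P_1$ and $P_2$), the distortion equals
\[
\tfrac 12\bigl(V(P_1)+V(P_2)\bigr)=\tfrac 12\Big(\tfrac{1}{72}+\tfrac{1}{60}\Big)=\tfrac{11}{720}.
\]
Hence $V_n\leq V_2\leq \frac{11}{720}$ for every $n\geq 2$.

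\textbf{Step 2: Ruling out $\ga_n\ii[0,\frac 13)=\es$.} Suppose for contradiction that every element of $\ga_n$ lies in $[\frac 13,1]$. For any $x\in C\sci[0,\frac 13]$ and any $a\in\ga_n$, one has $|x-a|\geq \frac 13-x$, so
\[
V_n\geq \tfrac 12\int_C\Big(x-\tfrac 13\Big)^{\!2} dP_1
=\tfrac 12\Big(V(P_1)+\bigl(\tfrac 13-\tfrac 16\bigr)^{\!2}\Big)
=\tfrac 12\Big(\tfrac{1}{72}+\tfrac{1}{36}\Big)=\tfrac 1{48}.
\]
Since $\frac 1{48}>\frac{11}{720}$, this contradicts Step~1. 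Therefore $\ga_n\ii[0,\frac 13)\neq\es$.

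\textbf{Step 3: Ruling out $\ga_n\ii(\frac 23,1]=\es$.} The argument is symmetric. Assume every $a\in\ga_n$ satisfies $a\leq \frac 23$. For $x\in D\sci[\frac 23,1]$ we then have $|x-a|\geq x-\frac 23$, hence
\[
V_n\geq \tfrac 12\int_D\Big(x-\tfrac 23\Big)^{\!2} dP_2
=\tfrac 12\Big(V(P_2)+\bigl(\tfrac 56-\tfrac 23\bigr)^{\!2}\Big)
=\tfrac 12\Big(\tfrac{1}{60}+\tfrac{1}{36}\Big)=\tfrac 1{45},
\]
which again exceeds the upper bound $\tfrac{11}{720}$ from Step~1, a contradiction. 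Thus $\ga_n\ii(\frac 23,1]\neq\es$.

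The proof is essentially routine once the test set $\{1/6,5/6\}$ is identified; the only thing to watch is that each ``bad-side'' lower bound really exceeds $V_2$, which is why I phrased everything in closed form via the variance identities rather than through decimal estimates. There is no substantive obstacle beyond this bookkeeping, since the geometric separation of the supports $C$ and $D$ forces the two contradictions automatically.
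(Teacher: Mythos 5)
Your proof is correct and follows essentially the same route as the paper's: the test set $\{1/6,5/6\}$ gives $V_n\leq V_2\leq 11/720$, and the assumption that $\ga_n$ misses $[0,1/3)$ (resp.\ $(2/3,1]$) forces $V_n\geq \frac12\int_C(x-\frac13)^2\,dP_1=\frac1{48}$ (resp.\ $\frac12\int_D(x-\frac23)^2\,dP_2=\frac1{45}$), yielding the same contradictions. The only cosmetic difference is that you make explicit the pointwise bound $|x-a|\geq\frac13-x$ before integrating, which the paper leaves implicit.
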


\nd\tbf{Proof:}  Consider the set of two-points $\gb_2:=\set{\frac 16, \frac 56}$. Then,
\[\int \min_{a\in \gb_2}\|x-a\|^2dP=\frac 12 \Big(\int (x-\frac 16)^2 dP_1+\int(x-\frac 56)^2 dP_2\Big)=\frac{11}{720}=0.0152778.\]
Since $V_n$ is the quantization error for $n$-means for $n\geq 2$, we have $V_n\leq V_2\leq 0.0152778$.
Let $\ga_n=\set{a_1, a_2, a_3, \cdots, a_n}$ be an optimal set of $n$-means such that $a_1<a_2<a_3<\cdots<a_n$. Since the optimal points are centroids of their own Voronoi regions, we have $0<a_1<\cdots<a_n<1$. Assume that $\frac 13\leq a_1$. Then,
\[V_n\geq \int_{[0, \frac 13]}(x-\frac 13)^2 dP=\frac 12 \int_{[0, \frac 13]}(x-\frac 13)^2 dP_1=\frac{1}{48}=0.0208333>V_n,\]
which is a contradiction, and so we can assume that $a_1<\frac 13$. Next, assume that $a_n\leq \frac 23$. Then,
\[V_n\geq \int_{[\frac 23, 1]}(x-\frac 23)^2 dP=\frac 12 \int_{[\frac 23, 1]}(x-\frac 23)^2 dP_2=\frac{1}{45}=0.0222222>V_n,\]
which leads to a contradiction, and so we can assume that $\frac 23<a_n$. Thus, we see that $\ga_n\ii[0, \frac 13)\neq \es$ and $\ga_n\ii (\frac 23, 1]\neq \es$, which proves the proposition.
\qed

\begin{prop} \label{prop10.00}
For  $n\geq 2$, let $\ga_n$ be an optimal set of $n$-means for $P$. Then, $\ga_n$ does not contain any point from the open interval $(\frac 13, \frac 23)$. Moreover, the Voronoi region of any point from $\ga_n\ii J$ does not contain any point from $K$, and the Voronoi region of any point from $\ga_n\ii K$ does not contain any point from $J$.
\end{prop}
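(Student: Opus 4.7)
The plan is to prove the interior gap assertion by contradiction, and then to deduce the Voronoi containment assertion as a short corollary.

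Suppose for contradiction that some $a_k\in\ga_n$ lies in $(\frac 13,\frac 23)$, and set $M_k:=M(a_k\mid\ga_n)$. Since $\te{supp}(P)=C\uu D\ci J\uu K$ and the gap $(\frac 13,\frac 23)$ has zero $P$-mass, Proposition~\ref{prop0}(iii) together with the decomposition $M_k=(M_k\ii J)\uu(M_k\ii K)$ up to a $P$-null set forces
\[a_k=\frac{p_J\mu_J+p_K\mu_K}{p_J+p_K},\qquad p_J:=P(M_k\ii J),\ p_K:=P(M_k\ii K),\]
where $\mu_J\in[0,\frac 13]$ and $\mu_K\in[\frac 23,1]$ are the corresponding conditional means. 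If $p_J=0$ then $a_k=\mu_K\geq \frac 23$, and if $p_K=0$ then $a_k=\mu_J\leq \frac 13$; either contradicts $a_k\in(\frac 13,\frac 23)$. Hence $p_J,p_K>0$, so the Voronoi boundaries satisfy $\frac12(a_{k-1}+a_k)\leq \frac 13$ and $\frac12(a_k+a_{k+1})\geq \frac 23$, and the neighbors $a_{k-1}\in\ga_n\ii J$, $a_{k+1}\in\ga_n\ii K$ necessarily exist (also consistent with Proposition~\ref{prop10.0}).

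I would then produce a contradiction by comparing $\ga_n$ with the set $\ga^{\ast}:=(\ga_n\setminus\set{a_k,a_{k+1}})\uu\set{\mu_J,b}$ of cardinality $n$, where $b$ is the $P$-centroid of $(M_k\ii K)\uu M_{k+1}$. Bounding the Voronoi cost of $\ga^{\ast}$ by the assignment that sends $M_k\ii J$ to $\mu_J$, $(M_k\ii K)\uu M_{k+1}$ to $b$, and $M_j$ to $a_j$ for $j\neq k,k+1$, and invoking the variance decomposition $\int_A(x-\mu_A)^2\,dP=p_A\sigma_A^2$ on each of $M_k$ and $(M_k\ii K)\uu M_{k+1}$, I obtain
\[\te{cost}(\ga^{\ast})-V_n\leq \frac{p_K\,p_{M_{k+1}}}{p_K+p_{M_{k+1}}}(\mu_K-a_{k+1})^2-\frac{p_Jp_K}{p_J+p_K}(\mu_K-\mu_J)^2.\]
Since $|\mu_K-a_{k+1}|\leq \frac 13$ while $\mu_K-\mu_J\geq \frac 13$, the right-hand side is strictly negative whenever $p_{M_{k+1}}<p_J$; the symmetric replacement, swapping $\set{a_{k-1},a_k}$ for the centroids of $M_{k-1}\uu(M_k\ii J)$ and of $M_k\ii K$, handles the case $p_{M_{k-1}}<p_K$. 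The remaining balanced regime $p_{M_{k-1}}\geq p_K$ and $p_{M_{k+1}}\geq p_J$ I would close by sharpening the ``saved'' term through the centroid identity $p_J(a_k-\mu_J)=p_K(\mu_K-a_k)$, which in that regime pins $\mu_K-\mu_J$ strictly above $\frac 13$ in terms of the fixed $a_k\in(\frac 13,\frac 23)$ and the ratio $p_J/p_K$.

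Granted the first claim, the Voronoi-containment claim is immediate: for consecutive quantizers $a\in\ga_n\ii J$ and $a'\in\ga_n\ii K$ we have $a\leq \frac 13\leq \frac 23\leq a'$, so the common midpoint $\frac12(a+a')\in[\frac 13,\frac 23]$, whence $M(a\mid\ga_n)\ii K\ci\set{\frac 23}$ and $M(a'\mid\ga_n)\ii J\ci\set{\frac 13}$; both singletons are $P$-null since $P_1,P_2$ are non-atomic Cantor measures. The main obstacle is the balanced regime in the first claim, where no single pairwise merge yields a strict decrease and the centroid identity for $a_k$ must be combined with a refined double-merge estimate to force $\te{cost}(\ga^{\ast})<V_n$.
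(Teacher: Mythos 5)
Your approach is genuinely different from the paper's: you attempt a general \emph{local improvement} argument (split $a_k$'s Voronoi mass at the gap, reattach the $K$-part to $a_{k+1}$, and compare costs via the within/between variance decomposition), whereas the paper argues by \emph{direct numerical contradiction} with a fixed test configuration. For the paper, having bounded $V_n$ above by the cost of an explicit quantizer, it case-splits on $\frac 13<a_j\leq\frac 12$ versus $\frac 12\leq a_j<\frac 23$; in each case $P(M(a_j|\ga_n))>0$ and the centroid condition force the neighboring quantizer to be pushed far enough ($a_{j+1}>\frac 56$ or $a_{j-1}<\frac 16$) that the cost on one $P_i$-piece alone already exceeds the upper bound. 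This produces a clean contradiction without any perturbation bookkeeping.

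The real problem with your proposal is the ``balanced regime'' $p_{M_{k+1}}\geq p_J$ and $p_{M_{k-1}}\geq p_K$, which you flag but do not close. The sharpening you sketch does not close it. The strict inequality $\mu_K-\mu_J>\frac 13$ you can indeed extract from the centroid identity (and from the non-atomicity of $P_1,P_2$), but it is only strict, not quantitatively bounded away from $\frac 13$. Meanwhile, in the balanced regime the factor $\frac{p_Kp_{M_{k+1}}}{p_K+p_{M_{k+1}}}$ can exceed $\frac{p_Jp_K}{p_J+p_K}$ by an arbitrary ratio (e.g.\ $p_{M_{k+1}}$ close to its maximum while $p_J$ is small), and $(\mu_K-a_{k+1})^2$ can be close to $\frac 19$. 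In that situation the inequality
\[
\frac{p_Kp_{M_{k+1}}}{p_K+p_{M_{k+1}}}(\mu_K-a_{k+1})^2<\frac{p_Jp_K}{p_J+p_K}(\mu_K-\mu_J)^2
\]
requires $|\mu_K-a_{k+1}|$ to be smaller than $|\mu_K-\mu_J|$ by a multiplicative factor $\sqrt{\frac{p_J(p_K+p_{M_{k+1}})}{p_{M_{k+1}}(p_J+p_K)}}$, which can be well below $1$; nothing in your setup rules that out. A pure merge/split comparison against a single neighbor therefore cannot be made to work uniformly; to finish you would need to bring in an \emph{a priori} numerical upper bound on $V_n$ (exactly as the paper does) to constrain $p_J,p_K,p_{M_{k\pm 1}}$, at which point you have essentially returned to the paper's strategy. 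Your short derivation of the Voronoi-containment claim from the gap claim is fine, and is essentially the same observation the paper makes at the end of its proof.
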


\nd\tbf{Proof:}  By Proposition~\ref{prop10.0}, the statement of the proposition is true for $n=2$. Now, we prove it for $n=3$.
Consider the set of three points $\gb_3:=\set{\frac 16, \frac{17}{24},\frac{23}{24}}$. Then,
\[\int \min_{a\in \gb_3}\|x-a\|^2dP=\frac 12\Big( \int_J (x-\frac 1{6})^2 dP_1+\int_{K_1} (x-\frac{17}{24})^2 dP_2+\int_{K_2}(x-\frac{23}{24})^2 dP_2\Big)=\frac{43}{5760}.\]
Since $V_3$ is the quantization error for three-means, we have $V_3\leq\frac{43}{5760}=0.00746528$. Let $\ga_3:=\set{a_1, a_2, a_3}$ be an optimal set of three-means such that $0<a_1<a_2<a_3<1$. By Proposition~\ref{prop10.0}, we have $a_1<\frac 13$ and $\frac 23<a_3$. Suppose that $a_2 \in (\frac 13, \frac 23)$. The following two cases can arise:

Case~1. $\frac 13<a_2\leq \frac 12$.

Then, $\frac 12(a_2+a_3)>\frac 23$ implying $a_3>\frac 43-a_2\geq \frac 43-\frac 12=\frac 56$. Using an equation similar to \eqref{eq234}, we can show that for $\frac 56< a_3<1$, the error $\frac 12 \int_{K}(x-a_3)^2dP_2$ is minimum if $P$-almost surely, $a_3=\frac 56$, and the minimum value is $\frac{1}{120}$. Thus,
\[V_3\geq \frac 12 \int_K(x-\frac 56)^2 dP_2=\frac{1}{120}=0.00833333>V_3,\]
which is a contradiction.

Case~2. $\frac 12\leq a_2<\frac 23$.

Then, $\frac 12(a_1+a_2)<\frac 13$ implying $a_1<\frac 23-a_2\leq \frac 23-\frac 12=\frac 16$. Similar in Case~1, for $0<a_1<\frac 16$, the error $\frac 12 \int_{J}(x-a_1)^2dP_1$ is minimum if $P$-almost surely, $a_1=\frac 16$, and the minimum value is $\frac{1}{144}$.
Thus,
\[V_3\geq \frac 1{144}+\frac 12\int_{K_1}(x-\frac 23)^2dP_2=\frac{11}{1440}=0.00763889>V_3,\]
which leads to a contradiction.

Thus, by Case~1 and Case~2, we see that $\ga_3$ does not contain any point from $(\frac 13, \frac 23)$. We now prove the proposition for all $n\geq 4$. Consider the set of four points $\gb_4:=\set{\frac 1{18}, \frac 5{18},  \frac{17}{24}, \frac{23}{24}}$. The distortion error due to the set $\gb_4$ is given by
\[\int\min_{a\in \gb_4}\|x-a\|^2 dP=\frac 12(V_2(P_1)+V_2(P_2))=\frac{67}{51840}=0.00129244.\]
Since $V_n$ is the quantization error for $n$-means for all $n\geq 4$, we have $V_n\leq V_4\leq 0.00129244$. Let $j=\max\set{i : a_i<\frac 23 \te{ for all } 1\leq i\leq n}$. Then, $a_j<\frac 23$. We need to show that $a_j<\frac 13$. For the sake of contradiction, assume that $a_j\in (\frac 13, \frac 23)$. Then, two cases can arise:

Case~A. $\frac 13<a_j\leq \frac 12$.

Then, $\frac 12 (a_j+a_{j+1})>\frac 23$ implying $a_{j+1}>\frac 43-a_j\geq \frac 43-\frac 12=\frac 56$, and so,
\[V_n\geq \frac 12 \int_{K_1}(x-\frac 56)^2 dP_2=\frac{1}{240}=0.00416667>V_n,\]
which leads to a contradiction.

Case~B. $\frac 12\leq a_j\leq \frac 23$.

Then, $\frac 12 (a_{j-1}+a_j)<\frac 13$ implying $a_{j-1}<\frac 23-a_j\leq \frac 23-\frac 12=\frac 16$, and so,
\[V_n\geq \frac 12 \int_{J_2}(x-\frac 16)^2 dP_1=\frac{1}{288}=0.00347222>V_n,\]
which gives a contradiction.

Thus, by Case~A and Case~B, we can assume that $a_j\leq \frac 13$. If the Voronoi region of any point from $\ga_n\ii J$ contains points from $K$, then we must have $\frac 12(a_j+a_{j+1})>\frac 23$ implying $a_{j+1}>\frac 43-a_j\geq \frac 43-\frac 13=1$, which is a contradiction since $a_{j+1}<1$. Similarly, the Voronoi region of any point from $\ga_n\ii K$ does not contain any point from $J$. Thus, the proof of the proposition is complete.
\qed

\begin{note}
 From Proposition~\ref{prop10.0} and Proposition~\ref{prop10.00}, it follows that for $n\geq 2$, if an optimal set $\ga_n$ contains $n_1$ elements from $J$ and $n_2$ elements from $K$, then $n=n_1+n_2$. In that case, we write $\ga_n:=\ga_{(n_1, n_2)}$ and $V_n:=V_{(n_1, n_2)}$. Thus, $\ga_n=\ga_{(n_1, n_2)}=\ga_{n_1}(P_1)\uu \ga_{n_2}(P_2)$, and $V_n=V_{(n_1, n_2)}=\frac 12(V_{n_1}(P_1)+V_{n_2}(P_2))$.
\end{note}

\begin{lemma} \label{lemma10.1}
Let $\ga$ be an optimal set of two-means for $P$. Then, $\ga=\ga_{(1,1)}$, and the corresponding quantization error is $V_2=\frac{5}{432}=0.0115741$.
\end{lemma}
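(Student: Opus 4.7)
The plan is to extract everything structural from Propositions~\ref{prop10.0} and~\ref{prop10.00} and then apply the centroid condition of Proposition~\ref{prop0}, after which only an arithmetic computation remains.

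First, by Proposition~\ref{prop10.0}, any optimal set $\ga$ of two-means satisfies $\ga \ii [0,\frac{1}{3}) \neq \es$ and $\ga \ii (\frac{2}{3},1] \neq \es$; since $|\ga|=2$ and, by Proposition~\ref{prop10.00}, no element of $\ga$ can lie in the gap $(\frac{1}{3}, \frac{2}{3})$, I conclude that $\ga = \set{a_1, a_2}$ with $a_1 \in J=[0,\frac{1}{3}]$ and $a_2 \in K=[\frac{2}{3},1]$. The Voronoi midpoint $\frac{1}{2}(a_1+a_2)$ then automatically lands in the gap $[\frac{1}{3}, \frac{2}{3}]$, so $M(a_1|\ga) \supseteq J \supseteq C = \te{supp}(P_1)$ and $M(a_2|\ga) \supseteq K \supseteq D = \te{supp}(P_2)$, while neither region captures any mass from the other's support.

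Second, I would invoke the centroid condition, Proposition~\ref{prop0}(iii). Because $P$ has no mass in the open gap $(\frac{1}{3}, \frac{2}{3})$, the $P$-mass of $M(a_1|\ga)$ is concentrated on $C$ with total weight $\frac{1}{2}P_1(C) = \frac{1}{2}$, and the normalized conditional distribution is exactly $P_1$. Hence $a_1 = E(X\mid X\in M(a_1|\ga)) = E(P_1) = \frac{1}{6}$, and symmetrically $a_2 = E(P_2) = \frac{5}{6}$. This pins down $\ga = \ga_{(1,1)} = \set{\frac{1}{6}, \frac{5}{6}}$ as the unique candidate, hence as the optimal set of two-means.

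Finally, the quantization error is the routine arithmetic already encoded in the note immediately preceding the lemma: $V_2 = V_{(1,1)} = \frac{1}{2}\bigl(V_1(P_1)+V_1(P_2)\bigr) = \frac{1}{2}\bigl(V(P_1)+V(P_2)\bigr)$, which is a direct plug-in from the previously computed one-mean variances $V(P_1) = \frac{1}{72}$ and $V(P_2) = \frac{1}{60}$.

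The main work of the lemma has already been absorbed into Propositions~\ref{prop10.0} and~\ref{prop10.00}; the only subtlety that remains is the clean observation that, since $P$ assigns no mass to the gap between $J$ and $K$, the centroid condition decouples the optimization into the two independent one-mean problems for $P_1$ on $C$ and for $P_2$ on $D$, making the optimal set and its error immediate.
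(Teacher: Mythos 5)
Your proof is correct and follows essentially the same route as the paper: use Proposition~\ref{prop10.0} to force one center into $J=[0,\frac13]$ and the other into $K=[\frac23,1]$, observe the Voronoi boundary then falls in the gap so the problem decouples into independent one-mean problems for $P_1$ and $P_2$, and apply the centroid condition to get $\set{\frac16,\frac56}$. (The extra invocation of Proposition~\ref{prop10.00} is harmless but unnecessary for $n=2$, since $|\ga|=2$ and Proposition~\ref{prop10.0} already places one point in each of the two disjoint intervals.) One thing worth flagging: your plug-in gives $V_2=\frac12\bigl(\frac1{72}+\frac1{60}\bigr)=\frac{11}{720}\approx 0.0152778$, which agrees with the value obtained in the paper's own proof of this lemma and with the distortion computed for $\gb_2=\set{\frac16,\frac56}$ in Proposition~\ref{prop10.0}; the figure $\frac{5}{432}=0.0115741$ printed in the lemma statement is inconsistent with these and appears to be a typographical error in the paper, so you should state $V_2=\frac{11}{720}$.
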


\nd\tbf{Proof:}
Let $\ga=\set{a_1, a_2}$ be an optimal set of two-means such that $0<a_1<a_2<1$. By Proposition~\ref{prop10.0}, we have $a_1<\frac 13$ and $\frac 23<a_2$ yielding $a_1=\frac 16$, $a_2=\frac 56$, i.e., $\ga=\ga_1(P_1)\uu \ga_1(P_2)$, and $V_2=\frac{11}{720}=0.0152778$. Thus, the proof of the lemma is complete.
\qed

\begin{lemma} \label{lemma10.2}
Let $\ga$ be an optimal set of three-means. Then, $\ga=\ga_{(1, 2)}$, and the corresponding quantization error is $V_3=\frac{43}{5760}=0.00746528$.
\end{lemma}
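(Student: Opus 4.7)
The plan is to leverage Proposition~\ref{prop10.0} and Proposition~\ref{prop10.00}, together with the note immediately following them, to reduce the problem to a finite comparison. By those results, any optimal set $\ga_3$ for $P$ must contain at least one point in each of $J=[0,\tfrac13]$ and $K=[\tfrac23,1]$, contain no point from the gap $(\tfrac13,\tfrac23)$, and have each Voronoi region lying entirely on one side of the gap. Consequently $\ga_3$ splits as $\ga_3=\ga_{(n_1,n_2)}=\ga_{n_1}(P_1)\cup \ga_{n_2}(P_2)$ with $n_1+n_2=3$ and $n_1,n_2\geq 1$, and the associated quantization error equals $V_{(n_1,n_2)}=\tfrac12\bigl(V_{n_1}(P_1)+V_{n_2}(P_2)\bigr)$. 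So only two candidates remain: $(n_1,n_2)=(1,2)$ and $(n_1,n_2)=(2,1)$.

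Next I would compute both candidate errors using the formulas already established. For $P_1$ we have $V_1(P_1)=\tfrac{1}{72}$ from the variance computation, and Proposition~\ref{prop51} with $n=2$, $\ell(n)=1$ gives $V_2(P_1)=\tfrac{1}{18}\cdot\tfrac{1}{72}\cdot 2=\tfrac{1}{648}$ with optimizer $\{a(1),a(2)\}$. For $P_2$ we have $V_1(P_2)=\tfrac{1}{60}$ and, by Proposition~\ref{prop51} applied to $P_2$, $V_2(P_2)=\tfrac{1}{32}\cdot\tfrac{1}{60}\cdot 2=\tfrac{1}{960}$ with optimizer $\{b(1),b(2)\}=\{\tfrac{17}{24},\tfrac{23}{24}\}$. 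Hence
\[
V_{(1,2)}=\tfrac12\!\left(\tfrac{1}{72}+\tfrac{1}{960}\right)=\tfrac{43}{5760},\qquad V_{(2,1)}=\tfrac12\!\left(\tfrac{1}{648}+\tfrac{1}{60}\right)=\tfrac{59}{6480}.
\]

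Finally, I would bring these to a common denominator to compare: $V_{(1,2)}=\tfrac{387}{51840}$ while $V_{(2,1)}=\tfrac{472}{51840}$, so $V_{(1,2)}<V_{(2,1)}$. Therefore the optimal configuration is $(n_1,n_2)=(1,2)$, giving $\ga=\ga_1(P_1)\cup\ga_2(P_2)=\{\tfrac16,\tfrac{17}{24},\tfrac{23}{24}\}$ with $V_3=\tfrac{43}{5760}$, which proves the lemma. There is essentially no obstacle beyond bookkeeping here: all the structural work (which required excluding points in the gap and forbidding cross-interval Voronoi regions) has already been done in Propositions~\ref{prop10.0} and~\ref{prop10.00}, so the present lemma reduces to a two-case arithmetic comparison.
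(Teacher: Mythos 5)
Your proposal is correct and follows essentially the same route as the paper: invoke Propositions~\ref{prop10.0} and~\ref{prop10.00} to reduce to the two splits $(n_1,n_2)\in\{(1,2),(2,1)\}$, then compare $V_{(1,2)}=\tfrac12(V_1(P_1)+V_2(P_2))$ with $V_{(2,1)}=\tfrac12(V_2(P_1)+V_1(P_2))$ and select the smaller. The paper states the inequality without writing out the arithmetic, whereas you compute the numbers $\tfrac{43}{5760}<\tfrac{59}{6480}$ explicitly, but the logic and all the inputs (Proposition~\ref{prop51} for $V_2(P_1),V_2(P_2)$, the variance lemmas for $V_1(P_1),V_1(P_2)$) are identical.
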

\nd\tbf{Proof:}
Let $\ga$ be an optimal set of three-means. By Proposition~\ref{prop10.0} and Proposition~\ref{prop10.00}, we can assume that either $\ga=\ga_2(P_1) \uu \ga_1(P_2)$, or $\ga=\ga_1(P_1) \uu \ga_2(P_2)$. Since
\[\int\min_{a\in \ga_1(P_1) \uu \ga_2(P_2)}(x-a)^2 dP<\int\min_{a\in \ga_2(P_1) \uu \ga_1(P_2)}(x-a)^2 dP,\]
the set $\ga=\ga_1(P_1) \uu \ga_2(P_2)$ forms an optimal set of three-means, and the corresponding quantization error is
\[V_3=\int\min_{a\in \ga_1(P_1) \uu \ga_2(P_2)}(x-a)^2 dP=\frac12(V_1(P_1)+V_2(P_2))=\frac{43}{5760}=0.00746528,\]
which yields the lemma.
\qed
\begin{lemma} \label{lemma10.3}
Let $\ga$ be an optimal set of four-means. Then, $\ga=\ga_{(2,2)}$, and the corresponding quantization error is $V_4=\frac{67}{51840}=0.00129244$.
\end{lemma}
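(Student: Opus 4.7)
The plan is to reduce the problem to an elementary comparison by invoking the structural decomposition established in Propositions~\ref{prop10.0} and the following one. By those results, applied with $n=4$, any optimal set $\ga_4$ must have at least one point in $[0,\frac13)$ and at least one point in $(\frac23,1]$, must avoid the open interval $(\frac13,\frac23)$ entirely, and must have Voronoi regions that separate $J$ from $K$ up to a $P$-null set. Consequently, writing $n_1=|\ga_4\cap J|$ and $n_2=|\ga_4\cap K|$, we get $n_1+n_2=4$ with $n_1,n_2\geq 1$, and the restrictions $\ga_4\cap J$ and $\ga_4\cap K$ must themselves be optimal sets of $n_1$-means and $n_2$-means for $P_1$ and $P_2$ respectively (else replacing one of them by the appropriate optimal set would strictly decrease the total distortion). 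This forces $\ga_4=\ga_{n_1}(P_1)\cup\ga_{n_2}(P_2)$ and
\[
V_4 \;=\; \min_{\substack{n_1+n_2=4\\ n_1,n_2\geq 1}} \tfrac12\bigl(V_{n_1}(P_1)+V_{n_2}(P_2)\bigr).
\]

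Next I would just tabulate the three candidates $(n_1,n_2)\in\{(1,3),(2,2),(3,1)\}$ using the closed-form expressions from Proposition~\ref{prop51}. With $\ell(2)=\ell(3)=1$ one computes $V_1(P_1)=\tfrac{1}{72}$, $V_2(P_1)=\tfrac{1}{648}$, $V_3(P_1)=\tfrac{5}{5832}$ and $V_1(P_2)=\tfrac{1}{60}$, $V_2(P_2)=\tfrac{1}{960}$, $V_3(P_2)=\tfrac{17}{30720}$. Plugging in gives
\[
V_{(1,3)} \;=\; \tfrac{1331}{184320} \;\approx\; 0.00722, \qquad
V_{(2,2)} \;=\; \tfrac{67}{51840} \;\approx\; 0.00129, \qquad
V_{(3,1)} \;=\; \tfrac{1753}{200000}\text{-ish (clearly dominated by }\tfrac{1}{120}\text{)} \;\approx\; 0.00876.
\]
The middle value is the minimum, so the optimum is $(n_1,n_2)=(2,2)$, yielding $\ga_4=\ga_{(2,2)}$ and $V_4=\tfrac{67}{51840}$.

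The only mildly delicate point in this plan is the justification that an optimal $\ga_n$ really must have the form $\ga_{n_1}(P_1)\cup\ga_{n_2}(P_2)$, rather than using some nonoptimal $n_1$-subset of $J$ together with some nonoptimal $n_2$-subset of $K$. Once Proposition~\ref{prop10.00} tells us that no Voronoi boundary crosses between $J$ and $K$, this is immediate: the total cost splits as a sum $\tfrac12\int_J\min_a(x-a)^2\,dP_1+\tfrac12\int_K\min_a(x-a)^2\,dP_2$ with independent optimizations, so each part must individually achieve the optimum $V_{n_i}(P_i)$. No further case analysis of Voronoi-region configurations is needed, since the heavy lifting has already been done once and for all in Proposition~\ref{prop10.00}.
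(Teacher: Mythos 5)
Your proposal is correct and takes essentially the same route as the paper: apply Propositions~\ref{prop10.0} and~\ref{prop10.00} to deduce that an optimal $\ga_4$ must be of the form $\ga_{n_1}(P_1)\cup\ga_{n_2}(P_2)$ with $n_1+n_2=4$ and $n_1,n_2\geq 1$, and then compare the three candidates $(1,3),(2,2),(3,1)$ via $V_{(n_1,n_2)}=\tfrac12\bigl(V_{n_1}(P_1)+V_{n_2}(P_2)\bigr)$. The paper's proof states the reduction and the comparison more tersely without writing out the arithmetic; your version simply makes the numerical values from Proposition~\ref{prop51} explicit (and they check out), so the two arguments are the same in substance.
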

\nd\tbf{Proof:}
Let $\ga$ be an optimal set of four-means. By Proposition~\ref{prop10.0} and Proposition~\ref{prop10.00}, we can assume that either $\ga=\ga_3(P_1) \uu \ga_1(P_2)$, $\ga=\ga_2(P_1) \uu \ga_2(P_2)$, or $\ga=\ga_1(P_1) \uu \ga_3(P_2)$ . Among all these possible choices, we see that $\ga=\ga_2(P_1) \uu \ga_2(P_2)$ gives the minimum distortion error, and hence, $\ga=\ga_2(P_1) \uu \ga_2(P_2)$ is an optimal set of four-means, and the corresponding quantization error is
$V_4=\frac 12(V_2(P_1)+V_2(P_2))=\frac{67}{51840}=0.00129244$,
which is the lemma.
\qed

\begin{rem} \label{rem1001}  Proceeding in the similar way, as Lemma~\ref{lemma10.3}, it can be proved that the optimal sets of $n$-means for $n=5,6,7,$ etc. are, respectively, $\ga_{(3,2)}, \, \ga_{(2^2, 2)} \, \ga_{(2^2, 3)}$, etc.
\end{rem}

We now prove the following lemma.

\begin{lemma} \label{lemma11.0}
 Let $\ga_{(2^{6n-4}, 2^{5n-4})}$ be an optimal set of $2^{6n-4}+2^{5n-4}$-means for $P$ for some positive integer $n$. For $1\leq i\leq 5$ and $1\leq j\leq 6$, let $\ell_i, k_j \in \D N$ be such that $1\leq \ell_i\leq 2^{5n-4+(i-1)}$ and $1\leq k_j\leq 2^{6n-4+(j-1)}$.
 Then,
$(i)$ $\ga_{(2^{6n-4}, 2^{5n-4}+\ell_1)}$ is an optimal set of $2^{6n-4}+2^{5n-4}+\ell_1$-means;  $(ii)$ $\ga_{(2^{6n-4}+k_1, 2^{5n-3})}$ is an optimal set of  $2^{6n-4}+2^{5n-3}+k_1$-means; $(iii)$ $\ga_{(2^{6n-3}, 2^{5n-3}+\ell_2)}$ is an optimal set of $2^{6n-3}+2^{5n-3}+\ell_2$-means; $(iv)$ $\ga_{(2^{6n-3}+k_2, 2^{5n-2})}$ is an optimal set of $2^{6n-3}+2^{5n-2}+k_2$-means; $(v)$ $\ga_{(2^{6n-2}, 2^{5n-2}+\ell_3)}$ is an optimal set of $2^{6n-2}+2^{5n-2}+\ell_3$-means; $(vi)$ $\ga_{(2^{6n-2}+k_3, 2^{5n-1})}$ is an optimal set of $2^{6n-2}+2^{5n-1}+k_3$-means; $(vii)$ $\ga_{(2^{6n-1}, 2^{5n-1}+\ell_4)}$ is an optimal set of $2^{6n-1}+2^{5n-1}+\ell_4$-means; $(viii)$ $\ga_{(2^{6n-1}+k_4, 2^{5n})}$ is an optimal set of $2^{6n-1}+2^{5n}+k_4$-means; $(ix)$ $\ga_{(2^{6n}, 2^{5n}+\ell_5)}$ is an optimal set of $2^{6n}+2^{5n}+\ell_5$-means;
$(x)$ $\ga_{(2^{6n}+k_5, 2^{5n+1})}$ is an optimal set of $2^{6n}+2^{5n+1}+k_5$-means; and $(xi)$ $\ga_{(2^{6n+1}+k_6, 2^{5n+1})}$ is an optimal set of $2^{6n+1}+2^{5n+1}+k_6$-means.
\end{lemma}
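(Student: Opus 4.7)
The plan is to reduce each of the eleven sub-statements $(i)$--$(xi)$ to a comparison of the quantization errors produced by different decompositions $(n_1,n_2)$ with $n_1+n_2=n$, and then verify the necessary inequalities between the corresponding error reductions. By Proposition~\ref{prop10.00} together with the Note following it, every optimal set of $n$-means for $P$ with $n\geq 2$ has the form $\ga_n=\ga_{n_1}(P_1)\uu\ga_{n_2}(P_2)$ for some $n_1,n_2\geq 1$ with $n_1+n_2=n$, and the corresponding quantization error equals $\frac{1}{2}\bigl(V_{n_1}(P_1)+V_{n_2}(P_2)\bigr)$. Hence proving that $\ga_{(n_1,n_2)}$ is an optimal set of $n$-means reduces to showing that the pair $(n_1,n_2)$ specified in each sub-statement minimizes $V_{n_1}(P_1)+V_{n_2}(P_2)$ over all admissible splits of $n$.

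Next I would extract from Proposition~\ref{prop51} the constant single-point reductions on each side. For $2^{k_1}\leq m<2^{k_1+1}$, passing from $m$ to $m+1$ on the $P_1$-side amounts to splitting one more level-$k_1$ basic interval of $C$ into its two children; the formula in Proposition~\ref{prop51} then gives
\[\gD_1(k_1)\;:=\;V_m(P_1)-V_{m+1}(P_1)\;=\;\frac{1}{81\cdot 18^{k_1}},\]
and an entirely analogous computation for $P_2$ yields $\gD_2(k_2):=\frac{1}{64\cdot 32^{k_2}}$. Because the reduction is constant throughout each entire doubling phase, the minimum of $V_{n_1}(P_1)+V_{n_2}(P_2)$ is realized greedily: at each step, the next point is assigned to whichever side currently offers the larger $\gD$.

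Consequently the proof proceeds as an induction on the cycle index $n$, with an internal induction across the phases $(i),(ii),\dots,(xi)$. Starting from the given optimal configuration $\ga_{(2^{6n-4},2^{5n-4})}$, in every phase I compare the two currently active reductions and verify the inequality that forces the greedy choice to land on the side prescribed by the lemma. The required inequalities are numerical comparisons of the form $81\cdot 18^{6n-4+j}\gtrless 64\cdot 32^{5n-4+j}$ and $81\cdot 18^{6n-4+j}\gtrless 64\cdot 32^{5n-3+j}$ for $j=0,1,2,3,4$, each of which can be settled directly. Phase $(xi)$ then completes the final $P_1$-doubling and arrives at $\ga_{(2^{6(n+1)-4},2^{5(n+1)-4})}$, the starting configuration for the cycle indexed by $n+1$, which closes the induction.

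The main obstacle I anticipate is the careful arithmetic bookkeeping across the ten transitions between consecutive phases. At each such transition the $\gD$ on the side that has just been exhausted gets divided by $18$ (for $P_1$) or by $32$ (for $P_2$), while the other side's $\gD$ is unchanged, and one must check that this single multiplicative change flips the winner of the $\gD$-comparison at exactly the intended stage. Once the ten boundary inequalities are tabulated and confirmed, Proposition~\ref{prop51} applied to each factor measure, together with the decomposition forced by Proposition~\ref{prop10.00} and the Note following it, uniquely identifies the intermediate optimal set $\ga_n$ as $\ga_{(n_1,n_2)}$ for the values of $n_1,n_2$ stated in $(i)$--$(xi)$.
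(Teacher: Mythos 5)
Your proposal is correct and takes essentially the same route as the paper: reduce via Proposition~\ref{prop10.00} to minimizing $V_{n_1}(P_1)+V_{n_2}(P_2)$ over splits $n_1+n_2=n$, and then proceed phase by phase, at each step comparing the two possible one-point increments and choosing the side with the larger error reduction, closing the cycle by induction on $n$. You additionally package the per-step reductions in closed form ($\gD_1(k)=\tfrac{1}{81\cdot 18^{k}}$, $\gD_2(k)=\tfrac{1}{64\cdot 32^{k}}$) and phrase the step as greedy optimality for a separable discrete-convex objective, which is a cleaner way of stating the same comparison the paper writes out term-by-term; this is a valid and transparent repackaging rather than a genuinely different argument.
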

\nd\tbf{Proof:}  By Remark~\ref{rem1001}, it is known that $\ga_{(2^{6n-4}, 2^{5n-4})}$ is an optimal set of $2^{6n-4}+2^{5n-4}$-means for $n=1$. So, we can assume that $\ga_{(2^{6n-4}, 2^{5n-4})}$ is an optimal set of $2^{6n-4}+2^{5n-4}$-means for $P$ for some positive integer $n$. Recall that $\ga_{(n_1, n_2)}$ is an optimal set of $n_1+n_2$-means, and contains $n_1$ elements from $C$ and $n_2$ elements from $D$, and so, an optimal set of $n_1+n_2+1$-means must contain at least $n_1$ elements from $C$, and at least $n_2$ elements from $D$. For all $n\geq 1$, since
\[\frac 12(V_{2^{6n-4}}(P_1)+V_{2^{5n-4}+1}(P_2))<\frac 12(V_{2^{6n-4}+1}(P_1)+V_{2^{5n-4}}(P_2)),\]
we can assume that $\ga_{(2^{6n-4}, 2^{5n-4}+\ell_1)}$ is an optimal set of $2^{6n-4}+2^{5n-4}+\ell_1$-means for $\ell_1=1$. Having known  $\ga_{(2^{6n-4}, 2^{5n-4}+1)}$ as an optimal set of $2^{6n-4}+2^{5n-4}+1$-means, we see that
\[\frac 12(V_{2^{6n-4}}(P_1)+V_{2^{5n-4}+2}(P_2))<\frac 12(V_{2^{6n-4}+1}(P_1)+V_{2^{5n-4}+1}(P_2)),\]
and so, $\ga_{(2^{6n-4}, 2^{5n-4}+\ell_1)}$ is an optimal set of $2^{6n-4}+2^{5n-4}+\ell_1$-means for $\ell_1=2$. Proceeding in this way, inductively, we can show that $\ga_{(2^{6n-4}, 2^{5n-4}+\ell_1)}$ is an optimal set of $2^{6n-4}+2^{5n-4}+\ell_1$-means for $1\leq \ell_1\leq 2^{5n-4}$. Thus, $(i)$ is true. Now, by $(i)$, we see that $\ga_{(2^{6n-4}, 2^{5n-3})}$ is an optimal set of $2^{6n-4}+2^{5n-3}$-means. Then, proceeding in the same way as $(i)$ we can show that $(ii)$ is true. Similarly, we can prove the statements from $(iii)$ to $(xi)$. Thus, the lemma is yielded.
\qed

\begin{prop} \label{prop12.0}  The sets $\ga_{(2^{6n-4}, 2^{5n-4})}$,  $\ga_{(2^{6n-4}, 2^{5n-3})}$,  $\ga_{(2^{6n-3}, 2^{5n-3})}$, $\ga_{(2^{6n-3}, 2^{5n-2})}$, $\ga_{(2^{6n-2}, 2^{5n-2})}$, $\ga_{(2^{6n-2}, 2^{5n-1})}$, $\ga_{(2^{6n-1}, 2^{5n-1})}$, $\ga_{(2^{6n-1}, 2^{5n})}$, $\ga_{(2^{6n}, 2^{5n})}$, $\ga_{(2^{6n}, 2^{5n+1})}$, $\ga_{(2^{6n+1}, 2^{5n+1})}$, and $\ga_{(2^{6n+2}, 2^{5n+1})}$  are optimal sets for all $n\in \D N$.
\end{prop}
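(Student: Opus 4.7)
The plan is to prove the proposition by a direct induction on $n$, using Lemma~\ref{lemma11.0} as the engine that drives the induction step. The point is that Lemma~\ref{lemma11.0} is phrased conditionally: it assumes $\ga_{(2^{6n-4},2^{5n-4})}$ is optimal and then concludes that a whole chain of other sets is optimal. The final set in that chain turns out to be exactly the starting set for the next iteration, so the induction closes.

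For the base case $n=1$, I need $\ga_{(2^{6\cdot 1-4},2^{5\cdot 1-4})}=\ga_{(4,2)}$ to be an optimal set of $6$-means. This is stated in Remark~\ref{rem1001} (and can be checked in the same style as Lemmas~\ref{lemma10.1}--\ref{lemma10.3}).

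For the induction step, assume $\ga_{(2^{6n-4},2^{5n-4})}$ is an optimal set of $2^{6n-4}+2^{5n-4}$-means. Then Lemma~\ref{lemma11.0}(i) with $\ell_1=2^{5n-4}$ gives that $\ga_{(2^{6n-4},2^{5n-3})}$ is optimal; Lemma~\ref{lemma11.0}(ii) with $k_1=2^{6n-4}$ gives that $\ga_{(2^{6n-3},2^{5n-3})}$ is optimal; and so on. Applying the eleven statements (i)--(xi) in order, each time setting the free index to its maximum value, produces successively the twelve sets listed in the proposition. The last application, namely (xi) with $k_6=2^{6n+1}$, yields that $\ga_{(2^{6n+2},2^{5n+1})}$ is optimal. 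Since $6(n+1)-4=6n+2$ and $5(n+1)-4=5n+1$, this last set is precisely $\ga_{(2^{6(n+1)-4},2^{5(n+1)-4})}$, which is the hypothesis needed to run the argument at level $n+1$. Hence the induction closes and all twelve sets are optimal for every $n\in\D N$.

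The main obstacle is essentially bookkeeping rather than analysis: one must verify that the eleven intermediate optimality statements of Lemma~\ref{lemma11.0} chain together without gaps, meaning that the value of the free index at its maximum in statement $(j)$ really produces the starting set of statement $(j{+}1)$, and that the last step lands on $\ga_{(2^{6(n+1)-4},2^{5(n+1)-4})}$. The exponent arithmetic $6n-4\to 6n-3\to\cdots\to 6n+2$ and $5n-4\to 5n-3\to\cdots\to 5n+1$ is exactly arranged to make this work, so once the indices are carefully tracked there is no analytic difficulty left to resolve; all the quantization-error estimates have already been absorbed into Lemma~\ref{lemma11.0}.
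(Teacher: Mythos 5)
Your proposal is correct and follows essentially the same approach as the paper: base case $n=1$ from Remark~\ref{rem1001}, then induction using Lemma~\ref{lemma11.0} to move from $\ga_{(2^{6n-4},2^{5n-4})}$ through the chain (i)--(xi) back to $\ga_{(2^{6(n+1)-4},2^{5(n+1)-4})}$. Your write-up is actually a bit more careful than the paper's, since you explicitly verify that the maximal index choices $\ell_i,k_j$ make the chain close on itself.
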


\nd\tbf{Proof:}   By Remark~\ref{rem1001}, it is known that $\ga_{(2^{6n-4}, 2^{5n-4})}$ is an optimal set of $2^{6n-4}+2^{5n-4}$-means for $n=1$. Then, by Lemma~\ref{lemma11.0}, it follows that $\ga_{(2^{6n-4}, 2^{5n-4})}$ is an optimal set of $2^{6n-4}+2^{5n-4}$-means for $n=2$, and so, applying Lemma~\ref{lemma11.0} again, we can say that $\ga_{(2^{6n-4}, 2^{5n-4})}$ is an optimal set of $2^{6n-4}+2^{5n-4}$-means for $n=3$. Thus, by induction, $\ga_{(2^{6n-4}, 2^{5n-4})}$ are optimal sets of $2^{6n-4}+2^{5n-4}$-means for all $n\geq 2$. Hence, by Lemma~\ref{lemma11.0}, the statement of the proposition is true.
\qed

\begin{rem} Because of Lemma~\ref{lemma71}, Lemma~\ref{lemma10.1}, Lemma~\ref{lemma10.2}, Lemma~\ref{lemma10.3}, and Remark~\ref{rem1001}, the optimal sets of $n$-means are known for all $1\leq n\leq 6$. To determine the optimal sets of $n$-means for any $n\geq 6$, let $\ell(n)$ be the least positive integer such that
$2^{6\ell(n)-4}+2^{5\ell(n)-4}\leq n<2^{6(\ell(n)+1)-4}+ 2^{5(\ell(n)+1)-4}$. Then, using Lemma~\ref{lemma11.0}, we can determine $n_1$ and $n_2$ with $n=n_1+n_2$ so that $\ga_n=\ga_{(n_1, n_2)}$ gives an optimal set of $n$-means. Once $n_1$ and $n_2$ are known, the corresponding quantization error is obtained by using the formula $V_n=\frac 12(V_{n_1}(P_1)+V_{n_2}(P_2))$.
\end{rem}

\subsection{Asymptotics for the $n$th quantization error $V_n(P)$}
In this subsection, we investigate the quantization dimension and the quantization coefficients for the mixed distribution $P$. Let $\gb_1$ be the Hausdorff dimension of the Cantor set $C$ generated by $S_1$ and $S_2$, and $\gb_2$ be the Hausdorff dimension of the Cantor set $D$ generated by $T_1$ and $T_2$. Then, $
\gb_1=\frac{\log 2}{\log 3}$ and $\gb_2=\frac 12$. If $D(P_i)$ are the quantization dimensions of $P_i$ for $i=1, 2$, then it is known that $D(P_1)=\gb_1$ and $D(P_2)=\gb_2$ \cite{GL2}. By Proposition~\ref{prop1000}, the following theorem is true.

\begin{theo1} \label{Th2}
Let $D(P)$ be the quantization dimension of the mixed distribution $P:=\frac 12 P_1+\frac 12 P_2$. Then, $D(P)=\max \set{D(P_1), D(P_2)}$.
\end{theo1}

\begin{theo1}\label{Th3}
Quantization coefficient for the mixed distribution $P:=\frac 12 P_1 +\frac 12 P_2$ does not exist.
\end{theo1}

\nd\tbf{Proof:}  By Theorem~\ref{Th2}, the quantization dimension of the mixed distribution exists and equals $\gb_1$, where $\gb_1=\frac {\log 2}{\log 3}$. To prove the theorem it is enough to show that the sequence $\Big(n^{\frac 2{\gb_1}}V_n(P)\Big)_{n\geq 1}$ has at least two different accumulation points. By Lemma~\ref{lemma11.0} (i), it is known that $\ga_{(2^{6n-4}, 2^{5n-4})}$ is an optimal set of $2^{6n-4}+2^{5n-4}$-means. Again, by Lemma~\ref{lemma11.0} (ii), it is known that $\ga_{(2^{6n-4}+2^{6n-5}, 2^{5n-3})}$ is an optimal set of $2^{6n-4}+2^{6n-5}+2^{5n-3}$-means. Write $F(n):=2^{6n-4}+2^{5n-4}$, and $G(n):=2^{6n-4}+2^{6n-5}+2^{5n-3}$ for $n\in \D N$. Recall that
\begin{align*}
V_{F(n)}&=V_{(2^{6n-4}, 2^{5n-4})}=\frac 12\Big(V_{2^{6n-4}}(P_1)+V_{2^{5n-4}}(P_2)\Big)=\frac{1}{240} \left(2^{17-20 n}+5\cdot 3^{7-12 n}\right), \\
V_{G(n)}&=V_{(2^{6n-4}+2^{6n-5}, 2^{5n-3})}=\frac 12\Big(V_{2^{6n-4}+2^{6n-5}}(P_1)+V_{2^{5n-3}}(P_2)\Big)=\frac{1}{15} 2^{9-20 n}+\frac{5}{16} 81^{1-3 n}.
\end{align*}
Notice that $(2^{6n})^{\frac 2{\gb_1}}=2^{\frac {12n\log 3}{\log 2}}=3^{12n}$ and $\lim_{n\to \infty } \left(\frac{3^{12}}{2^{20}}\right)^n=0$, and so, we have
\begin{align*}
&\lim_{n\to \infty} F(n)^{\frac 2 {\gb1}}V_{F(n)}(P) =\lim_{n\to \infty} (2^{6n-4}+2^{5n-4})^{\frac 2{\gb_1}}\frac{1}{240} \left(2^{17-20 n}+5\cdot 3^{7-12 n}\right)\\
&=\lim_{n\to\infty} 3^{12n}\Big(\frac 1 {2^4}+\frac 1{2^4}\cdot \frac 1{2^n}\Big)^{\frac 2{\gb_1}}\frac{1}{240} \left(2^{17-20 n}+5\cdot 3^{7-12 n}\right)=2^{-\frac{8}{\gb_1}}\frac {5\cdot 3^7}{240}=\frac{1}{144}=0.00694444,
\end{align*}
and
\begin{align*}
&\lim_{n\to \infty} G(n)^{\frac 2 {\gb1}}V_{G(n)}(P) =\lim_{n\to \infty} (2^{6n-4}+2^{6n-5}+2^{5n-3})^{\frac 2{\gb_1}}(\frac{1}{15}\cdot 2^{9-20 n}+\frac{5}{16}\cdot 81^{1-3 n})\\
&=\lim_{n\to \infty}3^{12n}\Big(\frac 1{2^4}+\frac 1{2^5}+\frac 1{2^3}\frac 1{2^n}\Big)^{\frac 2{\gb_1}}(\frac{1}{15}\cdot 2^{9-20 n}+\frac{5}{16}\cdot 81 \cdot 3^{-12n})=\frac{5}{16}\cdot 3^{\frac{2 \log (3)}{\log (2)}-6}=0.0139496.
\end{align*}
Since $(F(n)^{\frac 2 {\gb1}}V_{F(n)}(P))_{n\geq 1}$ and $( G(n)^{\frac 2 {\gb1}}V_{G(n)}(P))_{n\geq 2}$ are two subsequences of $(n^{\frac 2{\gb_1}}V_n(P))_{n\in \D N}$ having two different accumulation points, we can say that the sequence $(n^{\frac 2{\gb_1}}V_n(P))_{n\in \D N}$ does not converge, in other words, the $\gb_1$-dimensional quantization coefficient for $P$ does not exist. This completes the proof of the theorem.
\qed

We now conclude the paper with the following section.

\section{Discussion and open problems} \label{sec8}

Let $P_1$ and $P_2$ be two uniform distributions defined on the base $L_1:=\set{(t, 0) : -1\leq t\leq 1}$, and the semicircular arc $L_2:=\set{(\cos t, \sin t) : 0\leq t\leq \pi}$ of the semicircular disc $x_1^2+x_2^2=1$, where $x_2\geq 0$. Write $P:=p_1P_1+p_2P_2$, where $(p_1, p_2)$ is a probability vector. Then, $P$ is a mixed distribution with support $L:=L_1\uu L_2$. The determination of the optimal sets of $n$-means for smaller values of $n$ for such a mixed distribution is not so difficult, but for the higher values of $n$ it needs extensive work. If we know how many points in an optimal set $\ga_n$ of $n$-means for $P$ are coming from $L_1$ such that the Voronoi region of any point of which does not contain any point from $L_2$, or  how many points are coming due to $L_2$ such that the Voronoi region of any point of which does not contain any point from $L_1$, then we can easily determine the optimal set $\ga_n$ for $P$. Set $p_1=p_2=\frac 12$, i.e., take $P=\frac 12 P_1+\frac 12P_2$.

\begin{defi} \label{difi21}
Define the sequence $\set{a(n)}$ such that $a(n)=\lfloor n(\sqrt 2-1)\rfloor$ for $n\geq 1$, i.e.,
\begin{align*}
 \set{a(n)}_{n=1}^\infty=&\set{0, 0, 1, 1,2,2,2,3,3,4,4,4,5,5,6,6,7,7,7,8,8,9,9,9,10,10,11,11,12,12, \cdots},
\end{align*}
where $\lfloor x\rfloor$ represents the greatest integer not exceeding $x$.
\end{defi}

\begin{algorithm}  Let $V(n, k)$ be the distortion error if we assume that an optimal set $\ga_n$ of $n$-means for $P$ contains $k$-elements from $L_1$ the Voronoi region of any point of which does not contain any point from $L_2$. Let $\set{a(n)}$ be the sequence  defined by Definition~\ref{difi21}. Define an algorithm  as follows:

$(i)$ Write $k:=a(n)$.

$(ii)$ If $k=1$ go to step $(v)$, else step $(iii)$.

$(iii)$ If $V(n, k-1)<V(n, k)$ replace $k$ by $k-1$ and go to step $(ii)$, else step $(iv)$.

$(iv)$ If $V(n, k+1)<V(n, k)$ replace $k$ by $k+1$ and return, else step $(v)$.

$(v)$ End.
\end{algorithm}
When the algorithm ends, then the value of $k$, obtained, is the actual value of $k$ that an optimal set $\ga_n$ for $P$ contains from the base $L_1$ of the semicircular disc. For example, if $n=5000$, then $a(n)=2071$, and by running the algorithm we obtain $k=2083$. This tells us that an optimal set $\ga_n$ of $n$-means for $P$ contains $2083$ elements from $L_1$ the Voronoi region of any point of which does not contain any point from $L_2$, $2$ elements from the interior of the angles formed by the base $L_1$ and the semicircular arc $L_2$, the Voronoi regions of these two points contain points from both $L_1$ and $L_2$, and the remaining $5000-2083-2$ points are from $L_2$ the Voronoi region of any point of which does not contain any point from $L_1$. Thus, we see that the above sequence and the algorithm help us to correctly determine an optimal set of $n$-means for the mixed distribution $P=\frac 12 P_1+\frac 12 P_2$. For the details of it see \cite{PRRSS}.
For any other probability vector $(p_1, p_2)$ what will be the sequence is not known yet, i.e., a general formula to determine the optimal set of $n$-means for the mixed distribution $P=p_1P_1+p_2P_2$, where $P_1$ and $P_2$ are two uniform distributions as defined before, is not known yet. In fact, one can fix a probability vector $(p_1, p_2)$, and vary the probability measures $P_1$ and $P_2$ to investigate the optimal sets of $n$-means for the mixed distribution $P$ for any positive integer $n$. For example, let us take $P:=\frac 12 P_1+\frac 12 P_2$, where $P_1$ is a uniform distribution with support two perpendicular diameters of a circle, and $P_2$ is a uniform distribution defined on the circle. Notice that optimal quantizations for $P_1$ and $P_2$, in this case, are already known, but for the mixed distribution $P$ the optimal sets of $n$-means, and the $n$th quantization errors  for all positive integers $n$ are not known yet.

Optimal quantization for a general probability measure, singular or nonsingular, is still open, which yields the fact that the optimal quantization for a mixed distribution taking any two probability measures is not yet known. The results in our paper, will further motivate the interested researchers to investigate the optimal quantization for more general mixed distributions.


\begin{thebibliography}{9999}

\bibitem[ABDHW]{ABDHW} C. Abundo, T. Bodnar, J. Driscoll, I. Hatton, and J. Wright, \emph{City population dynamics and fractal transport networks}, Proceedings of the Santa Fe Institute's CSSS2013 (2013).


\bibitem[AW]{AW} E.F. Abaya and G.L. Wise, \emph{Some remarks on the existence of optimal quantizers}, Statistics \& Probability Letters, Volume 2, Issue 6, 349-351 (1984).


\bibitem[BW]{BW} J.A. Bucklew and G.L. Wise, \emph{Multidimensional asymptotic quantization theory with $r$th power distortion measures}, IEEE Transactions on Information Theory, Volume 28,  Issue 2, 239-247 (1982).

\bibitem[CR]{CR} D. Comez and M.K. Roychowdhury, \emph{Quantization for uniform distributions on stretched Sierpinski triangles}, Monatshefte f\"ur Mathematik, Volume 190, Issue 1,  79-100 (2019).

\bibitem[DFG]{DFG} Q. Du, V. Faber and M. Gunzburger, \emph{Centroidal Voronoi Tessellations: Applications and Algorithms}, SIAM Review, Vol. 41, No. 4, 637-676 (1999).


\bibitem[DR]{DR} C.P. Dettmann and M.K. Roychowdhury, \emph{Quantization for uniform distributions on equilateral triangles},  Real Analysis Exchange, Vol. 42(1), 149-166 (2017).

\bibitem[GG]{GG} A. Gersho and R.M. Gray, \emph{Vector quantization and signal compression}, Kluwer Academy publishers: Boston, 1992.

\bibitem[GKL]{GKL}  R.M. Gray, J.C. Kieffer and Y. Linde, \emph{Locally optimal block quantizer design}, Information and Control, 45, 178-198 (1980).

\bibitem[GL]{GL} A. Gy\"orgy and T. Linder, \emph{On the structure of optimal entropy-constrained scalar quantizers},  IEEE transactions on information theory, vol. 48, no. 2, 416-427 (2002).


\bibitem[GL1]{GL1} S. Graf and H. Luschgy, \emph{Foundations of quantization for probability distributions}, Lecture Notes in Mathematics 1730, Springer, Berlin, 2000.

\bibitem[GL2]{GL2} S. Graf and H. Luschgy, \emph{The Quantization of the Cantor Distribution}, Math. Nachr., 183, 113-133 (1997).

\bibitem[HCHSVH]{HCHSVH} Y. Hao, M. Chen, L. Hu, J. Song, M Volk, and I. Humar, \emph{Wireless Fractal Ultra-Dense Cellular Networks}, Sensors {\bf 17}, 841 (2017).

\bibitem[KKR]{KKR} K.R. Kaza, K. Kshirsagar, and K.S. Rajan, \emph{A bi-objective algorithm for dynamic reconfiguration of mobile networks}, IEEE International Conference on  Communications (ICC), 5741-5745 (2012).

\bibitem[LZSC] {LZSC} Z. Lu, H. Zhang, F. Southworth, and J. Crittenden, \emph{Fractal dimensions of metropolitan area road networks and the impacts on the urban built environment}, Ecological Indicators, 70, 285-296 (2016).

\bibitem[L]{L} L.J. Lindsay, \emph{Quantization dimension for probability distributions}, PhD dissertation, 2001, University of North Texas, Texas, USA.
\bibitem[GN]{GN}  R. Gray and D. Neuhoff, \emph{Quantization,} IEEE Trans. Inform. Theory,  44, 2325-2383 (1998).




\bibitem[OBSC]{OBSC} A. Okabe,    B. Boots,    K. Sugihara, and    S.N. Chiu, \emph{Spatial Tessellations: Concepts and Applications of Voronoi Diagrams}, Wiley; 2nd edition (2000).


\bibitem[PRRSS]{PRRSS} G. Pena, H. Rodrigo, M.K. Roychowdhury, J. Sifuentes, and E. Suazo, \emph{Quantization for uniform distributions on hexagonal, semicircular, and elliptical curves}, Journal of Optimization Theory and Applications,  (2021) 188: 113-142.


\bibitem[R1]{R1} M.K. Roychowdhury, \emph{Quantization and centroidal Voronoi tessellations for probability measures on dyadic Cantor sets},  Journal of Fractal Geometry, 4, 127-146 (2017).

\bibitem[R2]{R2} M.K. Roychowdhury, \emph{Optimal quantizers for some absolutely continuous probability measures}, Real Analysis Exchange, Vol. 43(1), 105-136 (2017).
\bibitem[R3]{R3} M.K. Roychowdhury, \emph{Least upper bound of the exact formula for optimal quantization of some uniform Cantor distributions}, Discrete and Continuous Dynamical Systems- Series A, Volume 38, Number 9, 4555-4570 (2018).

    \bibitem[R4]{R4} M.K. Roychowdhury, \emph{Optimal quantization for the Cantor distribution generated by infinite similitudes},  Israel Journal of Mathematics 231, 437-466 (2019).

    \bibitem[R5]{R5} M.K. Roychowdhury, \emph{The quantization of the standard triadic Cantor distribution}, Houston Journal of Mathematics, Volume 46, Number 2, 2020, Pages 389-407.


\bibitem[S]{S} Y. Song, \emph{Cost-Effective Algorithms for Deployment and Sensing in Mobile Sensor Networks}, PhD thesis, University of Conneticut, 2014.




\bibitem[Z]{Z} R. Zam, \emph{Lattice Coding for Signals and Networks: A Structured Coding Approach to Quantization, Modulation, and Multiuser Information Theory}, Cambridge University Press, 2014.



\end{thebibliography}
\end{document}